\documentclass[11pt]{amsart}

\usepackage{amsthm}
\usepackage{amssymb,amsmath,amsfonts,microtype}
\usepackage{graphicx}
\usepackage{pdfsync}
\usepackage{parskip}

\usepackage[margin=1in]{geometry}
\usepackage{amssymb}
\usepackage[hidelinks]{hyperref}
\usepackage[capitalize]{cleveref}
\usepackage{microtype}
\usepackage{esint}

\newtheorem{thm}{Theorem}[section]

\newtheorem{lem}{Lemma}[section]
\newtheorem{prop}{Proposition}[section]

\newtheorem*{thm1'}{Theorem 1'}

\newtheorem*{thm2'}{Theorem 2'}

\newtheorem*{thmB'}{Theorem B$^\prime$}
\newtheorem*{thmA'}{Theorem A$^\prime$}
\newtheorem*{propnA'}{Proposition A$^\prime$}
\newtheorem*{propnB'}{Proposition B$^\prime$}
\theoremstyle{remark}

\theoremstyle{definition}

\theoremstyle{remark}

\newcommand{\Z}{\mathbb{Z}}
\newcommand{\N}{\mathbb{N}}
\newcommand{\F}{\mathbb{F}}
\newcommand{\PP}{\mathbb{P}}

\newcommand{\C}{\mathbb{C}}

\newcommand{\FF}{\mathcal{F}}
\newcommand{\RR}{\mathcal{R}}

\newcommand{\NN}{\mathcal{N}}
\newcommand{\HH}{\mathcal{H}}

\newcommand{\GG}{\mathcal{G}}

\newcommand{\ux}{\mathbf{x}}

\newcommand{\uz}{\mathbf{z}}

\newcommand{\ur}{\mathbf{r}}

\newcommand{\la}{\lambda}

\newcommand{\ga}{\gamma}

\newcommand{\subs}{\subseteq}

\numberwithin{equation}{section}
\newcommand{\eq}{\begin{equation}
		\newcommand{\ee}{\end{equation}}}

\title{On almost primes solutions to forms of odd degrees in many variables}

\author{\'Akos Magyar}
\thanks{The author was partially supported by grant \#854813 by the Simons Foundation.}

\begin{document}

	\begin{abstract} Let $\FF=\{f_1,\ldots,f_R\}$ be a family of forms of odd degrees at most $d$ in $s$ variables. We study the solutions to the system $f_1(\ux)=\ldots=f_R(\ux)=0$ of the form $x_i=y_ip_i$ with $|y_i|\leq Y_\FF$ and $p_i$ being a prime for all $i\in [s]$ inside the box $[-N,N]^s$, for large $N$. We show that if the number of variables $s$ is sufficiently large with respect to the parameters $R$ and $d$, then there are at least $C_\FF N^{s-D}/(\log\,N)^s$ such solutions for some constants $C_\FF>0$ and $D\in\N$, with $D$ depending only on the initial parameters $R$ and $d$.
	\end{abstract} 
	
	\maketitle
	
	\section{Introduction.} Analytical approaches to study integer solutions to diophantine systems in many variables have been initiated by Davenport \cite{DavenportCubic16} and Birch \cite{BirchForms}, and has been an area of interest since then. More recently it has been observed that under suitable rank and local conditions diophantine systems also admit solutions among the primes, and in fact asymptotic formulae may be obtained for the number of prime solutions \cite{CookMagyarPrimes,YamagishiPrimesDifferingDegrees,liu2023forms,liu2024forms}.  
	
	In a related direction it has been shown by Birch \cite{BirchOddDegree} that if the number of variables $s$ is sufficiently large with respect to the degrees and number of forms then there exists a non-trivial integer solution as long as all forms have odd degrees, without any further assumptions. Birch's approach was not analytical, it was based on a procedure to find subspaces on which the system take simpler forms. In 1985 Schmidt \cite{SchmidtDensity} has substantially strengthened Birch's result, he has shown that a system of $\mathcal{F}=\{f_1,\ldots,f_R\}$ consisting of odd degree integral forms has $\gg_\FF N^{s-D}$ integral solutions inside a large box $[-N,N]^s$, where $D$ is a constant depending only on the parameters $d$ and $r$. He introduced a new notion of rank, now is referred to as the Schmidt invariant or "strength" \cite{SchmidtDensity,baily2024strength}, and exploited its recursive nature to transform the system step by step until it has large enough rank so that the circle method applies to count integer solutions.
	
	In \cite{CookMagyarPrimes} Cook and author have made Schmidt's regularization process more explicit in the form a of a regularity lemma type decomposition of a general system of forms, and used to count primes solutions to high rank diophantine systems satisfying appropriate local conditions. In \cite{YamagishiPrimesDifferingDegrees} Yamagishi has extended the method of \cite{CookMagyarPrimes} to systems consisting of forms of different degrees. In later works \cite{yamagishi2022diophantine,liu2023forms,liu2024forms} it was observed the regularization procedure is not necessary to obtain asymptotics for the number of solutions to non-singular systems in prime variables.

	It is our aim to extend Schmidt's result, to show that systems of odd degree forms in sufficiently large number of variables admit many solutions $\ux=(x_1,\dots,x_s)$, where each coordinate $x_i$ can be written in the form $x_i=y_ip_i$ with $y_i$ being a bounded natural number and $p_i$ a prime number, with the exception of a bounded number of coordinates which may be zero. In this generality this latter exceptional set of coordinates is necessary as the system may contain equations of the form $x_i=0$, or linear forms (or even odd powers of linear forms) whose linear combination lead to such equations forcing some variables to be equal to 0.
	
	\newpage

	\section{Main results.} Let $\FF=(f_1,\dots,f_R)$ be a system consisting of odd-degree forms, where by a ``form" we mean a homogeneous polynomial $f\in\Z[\ux]$, $(\ux=(x_1,\ldots,x_s))$. For any odd number $1\leq l\leq d$, let $\FF^{(l)}$ consist of the forms of $\FF$ of degree $l$. Set $r_l=|\FF^{(l)}|$, $\mathbf{r}=(r_d,\ldots,r_1)$ and $R:=r_1+\ldots +r_d$.
	
	For a given parameter $Y>0$ we will define an almost-prime solution set. Let $\NN_\FF(N,Y)$ denote the number of integer solutions $\ux\in [-N,N]^s$ to the system $f_1(\ux)=\ldots =f_R(\ux)=0$ such that every coordinate takes the form $x_i=y_ip_i$, where $p_i\in\PP$ is a prime and $y_i\in\Z$ satisfies $0\leq |y_i|\leq Y$. Our first result establishes a lower bound on $\NN_\FF(N,Y)$  provided the number of variables is sufficiently large.
	
	\begin{thm}\label{thm2.1} There exist positive integers $s_0=s_0(R,d)$ and $D=D(R,d)$ such that for any system of odd-degree forms $\FF$ in $s\geq s_0$ variables the following holds. There exist constants $Y_\FF, c_\FF>0$ such that 
		\eq\label{1.1}
		\NN_\FF(N,Y_\FF) \geq c_\FF\, \frac{N^{s-D}}{(\log\,N)^s}
		\ee
		for sufficiently large $N$.
	\end{thm}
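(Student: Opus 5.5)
We plan to follow Schmidt's strategy: reduce $\FF$, by restricting to a suitable integral linear subspace, to a system of high rank, and then count solutions of the reduced system in prime variables with the circle method of \cite{CookMagyarPrimes,YamagishiPrimesDifferingDegrees}. Concretely, we will construct integral vectors $\mathbf{e}_1,\dots,\mathbf{e}_m$, with $m$ as large as we like provided $s\ge s_0(R,d)$, and an injective linear map $\phi(\mathbf{z})=\sum_j z_j\mathbf{e}_j$ with the following properties. First, each coordinate of $\phi(\mathbf{z})$ is either identically zero or equals $a_i z_{j(i)}$ for a single index $j(i)$ and a bounded integer $a_i$. Second, the pulled back system $\FF\circ\phi$ is equivalent (after taking linear combinations and discarding dependent forms) to a system $\GG$ in $\mathbf{z}$ of bounded size whose Schmidt invariant is as large as required. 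The coefficients $a_i$ will give the bounded factors $y_i$, with $Y_\FF=\max|a_i|$. Coordinates with $a_i=0$ are covered by $y_i=0$. If $z_{j(i)}=p$ is prime, then $x_i=a_ip$ has the required almost prime form. Disjoint supports of the $\mathbf{e}_j$ make the first property automatic, so the task becomes choosing many disjointly supported vectors on which $\FF$ has high rank.

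\emph{Step 1 (regularization).} We run Schmidt's recursive procedure, in the explicit regularity-lemma form of \cite{CookMagyarPrimes}, but keep disjoint supports throughout. Start from degree $d$. If some combination of the degree-$d$ forms has low strength, it can be written as $\sum_k g_kh_k$ with a bounded number of lower degree factors. We then pass to a subspace spanned by disjointly supported vectors on which all the $h_k$ vanish. Since every degree is odd, the only obstruction to solvability over $\mathbb{R}$ disappears, and Birch's diagonalization argument \cite{BirchOddDegree}, applied inductively in the degree, produces such subspaces. When we restrict the form to coordinate blocks we use multilinear expansion: with disjoint supports $\mathbf{z}\mapsto f(\sum z_j\mathbf{e}_j)$ has diagonal terms $f(\mathbf{e}_j)z_j^{d}$ plus mixed terms, and we first kill the mixed terms by a Ramsey/Birch-type selection so that the system becomes nearly diagonal. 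Each step decreases a well-ordered tuple $(r_d,\dots,r_1)$ and costs a bounded number of dimensions, so the process terminates. We end with a system in $m$ variables that is either of high rank or diagonal with many variables; diagonal odd systems in many variables also have high rank in Schmidt's sense.

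\emph{Step 2 (counting).} For the final system $\GG(\mathbf{z})$ we apply the prime-variable circle method \cite{CookMagyarPrimes,YamagishiPrimesDifferingDegrees} and obtain
\[
\#\{\mathbf{z}\in\PP^m\cap[1,N']^m:\GG(\mathbf{z})=0\}\gg N'^{\,m-D'}(\log N')^{-m},
\]
provided the singular series and singular integral are positive. The real density is positive because the forms have odd degree, and we choose the subspace so that a nonsingular real point exists. The $p$-adic densities are the delicate part. Local obstructions for primes, e.g.\ congruence conditions mod $p$ that force $p\mid z_j$, have to be removed. We do this by absorbing a bounded modulus $W$ into the $a_i$ and including the local solvability requirement in Step 1: we choose the $\mathbf{e}_j$ so that $\GG$ has a nonsingular solution with coordinates in $(\Z/p^k)^\times$ for the finitely many relevant small $p$, while large $p$ are handled by the high rank. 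The remaining coordinates $x_i$ that do not sit in a prime block are set to zero. Lifting back to $\ux$ loses the $s-m$ free directions, which gives $D=s-m+D'$. This depends only on $R,d$ once $s_0$ is fixed and $m$ is kept bounded in terms of $R,d$, since the remaining variables are frozen at zero. The factor $(\log N)^{-m}\ge(\log N)^{-s}$ then yields \eqref{1.1}.

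\emph{Main obstacle.} The hard step is Step 1 made compatible with primes. Schmidt's reduction works over general subspaces, whose coordinates are linear combinations of several parameters, and a sum of primes is not an almost prime. We therefore need the disjoint-support (nearly diagonal) refinement together with nonsingular local solutions in units mod $p^k$. Our first attempt will be Birch's trick of restricting to vectors whose supports are pairwise disjoint, combined with a pigeonhole over coefficient patterns mod $W$, to guarantee both properties at once.
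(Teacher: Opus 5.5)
There is a genuine gap, and it is in the density exponent. Your own bookkeeping gives $D=s-m+D'$ with $m=O_{R,d}(1)$, so $D$ grows with $s$. The theorem needs a single $D=D(R,d)$ that works for every $s\ge s_0$. Equivalently, your parametrizing space must have codimension $O_{R,d}(1)$, and Step 1 cannot deliver that.

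\textbf{Vanishing factors cost unboundedly many dimensions.} Forcing the factors $h_k$ to vanish identically on a linear subspace is not a bounded loss. If some $h_k$ is, say, a nonsingular cubic, every linear space inside its zero locus has codimension at least about $s/2$. So the claim that each step ``costs a bounded number of dimensions'' is false.

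\textbf{Disjoint supports are costly even for linear factors.} Take $f=(x_1+\cdots+x_s)(x_1^2+\cdots+x_s^2)$ and disjointly supported $\mathbf{e}_j$. Then $f\circ\phi=L(\mathbf{z})Q(\mathbf{z})$ with $Q=\sum_j\|\mathbf{e}_j\|^2z_j^2\not\equiv 0$. High strength of $f\circ\phi$ therefore forces $L\equiv 0$, i.e.\ every $\mathbf{e}_j$ has coordinate sum $0$. Hence every block has size at least $2$, so $m\le s/2$. You then capture at most $O(N^{s/2})$ points, while the theorem asks for $N^{s-D}/(\log N)^s$ with $D$ independent of $s$. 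Birch's diagonalization is an existence device and loses far more than this.

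\textbf{The missing idea.} This is Schmidt's approach, and it is what the paper does. Do not restrict to a subspace on which the low-rank pieces vanish. Instead, adjoin the odd-degree factors to the system. This gives an auxiliary system $\GG$ of bounded size with $V(\GG)\subseteq V(\FF)$ and high strength (Proposition 3.1), which Baily--Lampert converts into high Birch rank. You then count points of $V(\GG)$ in all but $O_{R,d}(1)$ coordinates. The few zeroed coordinates serve only two purposes: every combination of the linear forms then has many nonzero coefficients, and no maximal-dimensional component of $V(\GG)$ lies in a coordinate hyperplane.

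\textbf{How the paper gets almost primes and local conditions.} Each coordinate carries its own prime, $x_i=y_ip_i$. Local obstructions are removed by scaling, not by choosing a subspace:
\begin{enumerate}
\item For each of the finitely many bad primes $p\in S$, take a nonsingular $p$-adic zero $\mathbf{x}^{(p)}$ with all coordinates nonzero. This comes from Wooley's theorem plus a Bertini argument; the coordinate cleanup and the analytic identity theorem force the coordinates to be nonzero.
\item Set $y_i=\prod_{p\in S}p^{v_p(x^{(p)}_i)}$.
\item Choose signs from a nonsingular real zero with nonzero coordinates.
\item Large primes are handled by the $\F_p$ point count, Lampert's singular-locus bound and Hensel's lemma.
\end{enumerate}
Your idea of absorbing a bounded modulus $W$ into the $a_i$ gestures at this scaling. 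However, you give no mechanism that produces nonsingular unit solutions at the small primes. You also give none for a nonsingular real zero with nonzero coordinates; odd degree alone does not provide one.
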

	
	Note that we allow $y_i=0$. However the number of integer solutions $x_i=0$ for all $i\in I$ for an index set $I$ of size $|I|>D$ is at most $N^{s-D-1}< c_\FF N^{s-D}/(\log\,N)^s$ for sufficiently large $N$. Thus the number of solutions in which at most $D$ coordinates are zero still satisfies the lower bound $c_\FF N^{s-D}/(\log\,N)^s$. Our method leads to a slightly stronger result allowing a small exceptional set of coordinates. Let $\NN_{\FF,J}(N,Y)$ denote the number of solutions where $x_i=y_ip_i$ with $1\leq |y_i|\leq Y$ for all $i\notin J$.
	
	\begin{thm}\label{thm2.2} There exist positive integers $s_0=s_0(R,d)$, $D=D(R,d)$ and $J_0=J_0(R,d)$ such that for any system of odd-degree forms $\FF$ in $s\geq s_0$ variables the following holds. There exists an index set $J\subs [s]$ of size $|J|\leq J_0$ and constants $Y_\FF, c_\FF>0$ for which
		\eq\label{1.2}
		\NN_{\FF,J}(N,Y_\FF) \geq c_\FF\, \frac{N^{s-D}}{(\log\,N)^s}
		\ee 
		for sufficiently large $N$.
	\end{thm}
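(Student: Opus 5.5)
We follow Schmidt's density argument and run it so that the final counting step works in prime variables. The plan is to produce a linear substitution $\ux = L(\mathbf{z})$ that reduces $\FF$ to a high-rank system, count prime points $\mathbf{z}$ by the circle method, and read off $x_i = y_ip_i$ from a diagonal structure of $L$.

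\textbf{Step 1: construct the substitution.} Using the recursive nature of the Schmidt invariant, as in the regularization lemma of \cite{CookMagyarPrimes} and its extension to forms of differing degrees by Yamagishi \cite{YamagishiPrimesDifferingDegrees}, we find the following data.
\begin{itemize}
\item A set $J$ of coordinates, of size $|J|\le J_0(R,d)$. These are the coordinates we set to zero, or fix.
\item Disjoint blocks $B_1,\dots,B_m \subs [s]\setminus J$, with $m$ large.
\item Fixed integer vectors $\mathbf{y}^{(j)}$ supported on $B_j$, with entries bounded by $Y_\FF$.
\end{itemize}
We substitute $x_i = y^{(j)}_i z_j$ for $i\in B_j$. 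The restricted system $\FF'(\mathbf{z})$ is then a system of odd-degree forms in the variables $z_1,\dots,z_m$, and it should have large rank (strength) relative to $R$ and $d$.

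The key is to carry out Birch's odd-degree descent \cite{BirchOddDegree} with attention to coordinates. Low-strength forms are written as $\sum_k g_kh_k$, and we restrict to the common zero set of the $g_k$, which cuts down to a subspace of bounded codimension. Whenever the restriction forces a coordinate to vanish, that coordinate is moved into $J$. Odd degree is used in two places:
\begin{itemize}
\item linear and odd-degree equations always have nontrivial real zeros, so the descent never gets stuck;
\item on a high-rank diagonal-type subsystem we obtain a nonsingular real zero with all $z_j\ne 0$.
\end{itemize}
Together these give the nonvanishing of the singular series and singular integral at the archimedean and $p$-adic places. The non-archimedean local conditions for primes in residue classes are handled by allowing the $y_i$ to absorb small primes, which is why we need $|y_i|\le Y_\FF$ rather than $y_i=1$.

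\textbf{Step 2: count prime solutions.} Once $\FF'$ has rank at least the threshold of \cite{CookMagyarPrimes,YamagishiPrimesDifferingDegrees}, the results there give
$$\#\{\mathbf{p}\in \PP^m\cap[-N/Y,N/Y]^m:\ \FF'(\mathbf{p})=0\}\gg N^{m-D}(\log N)^{-m},$$
where $D=\sum_l l\,r'_l$ is bounded in terms of $R$ and $d$. The local factors are positive by the choice made in Step 1.

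\textbf{Step 3: recover a lower bound in dimension $s$.} Coordinates outside $\bigcup_j B_j\cup J$ must be handled as well. I would arrange that the blocks cover $[s]\setminus J$. A cleaner alternative is to keep those coordinates as free prime variables inside the system, so that $m$ is essentially $s-|J|$ and the exponent reads $N^{s-D}/(\log N)^{s}$. If blocks have size larger than one, we lose exponent, so the descent must keep the blocks as singletons outside a bounded set. In other words, we substitute $x_i=y_ip_i$ directly for most $i$ and only fix boundedly many coordinates to zero or to constants.

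\textbf{Main obstacle.} The hardest point is the regularization itself. Each descent step passes to a subspace, and that subspace must be coordinate-aligned up to scaling: it is defined by setting boundedly many variables to zero or fixing relations on a bounded set, rather than by general linear relations that would mix coordinates and destroy the form $y_ip_i$. First I would try Schmidt's trick: apply the descent only to a bounded number of variables, treating the remaining variables as a high-rank "tail" added by a diagonalization argument.
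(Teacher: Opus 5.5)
\textbf{The gap is in Step 1.} It is exactly the ``main obstacle'' you name but do not resolve. Your scheme needs the system obtained from $\FF$ by a coordinate-aligned substitution ($x_i=y_iz_i$ off a bounded set $J$, zeros or constants on $J$) to have large rank. Such substitutions cannot create rank.

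Take the cubic $f=(x_1+\cdots+x_s)\,q(\ux)$ with $q$ a quadratic form. It has Schmidt rank $1$, and every such restriction of it is again (linear)$\times$(quadratic). So there is no high-rank ``tail'' on any subset of variables, and Schmidt's trick as you describe it cannot apply. Descent instead forces you onto $\{x_1+\cdots+x_s=0\}$, a subspace that is not coordinate-aligned. Moreover, for degree $\ge 5$ the odd factor can be a cubic, so the common zero set of the $g_k$ is not a subspace at all, contrary to your claim.

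The missing idea, which is what the paper does, is to \emph{enlarge the system rather than restrict the space}. The odd-degree regularity lemma (Proposition 3.1) adjoins the odd-degree factors as new equations in the same $s$ variables. This produces $\GG$ with $V_0(\GG)\subseteq V_0(\FF)$, boundedly many forms, large strength in each degree $\ell\ge 3$, and linearly independent linear forms. Linear factors are kept as linear equations to be solved in primes; Yamagishi's mixed-degree theorem handles them provided every nonzero combination has many nonzero coefficients. When some combination has few nonzero coefficients, one sets its boundedly many support coordinates to zero and discards one linear form. One then zeroes at most $R'$ further coordinates so that no top-dimensional component of $V_\GG$ lies in a coordinate hyperplane. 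These two steps are the entire source of $J$. The loss in the exponent comes from the extra equations, not from parametrizing a subspace. Strength is then converted to Birch rank via Baily--Lampert.

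\textbf{The local step is also only asserted.} For ``$y_i$ absorbing small primes'' to work, you need for each small prime $p$ a non-singular zero $\ux^{(p)}\in\mathbb{Z}_p^s$ with \emph{every} coordinate nonzero. Only then is $y_i=\prod_{p\in S}p^{v_p(x_i^{(p)})}$ defined, and only then does rescaling yield non-singular unit zeros of $\GG_{\mathbf y}$. The paper obtains such a zero as follows: Wooley's local solubility theorem gives a zero, a Bertini-type argument makes it non-singular, and the $p$-adic identity theorem together with the coordinate cleanup moves it off the coordinate hyperplanes.

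For large primes you also need non-singular unit zeros. These come from the finite-field point count (Lemma 5.1), the bounded rank drop on coordinate hyperplanes, Lampert's bound on the singular locus, and Hensel's lemma.

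A real zero with nonzero coordinates only handles the archimedean place, and its signs must be built into the signs of the $y_i$ (e.g.\ $x_1+\cdots+x_s=0$ has no zero in $(0,1)^s$). It says nothing about the $p$-adic factors, contrary to what your sketch suggests.
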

	
	It is clear that one may need to set some variables equal to zero, e.g the system can contain equations $x_i=0$. The fact that we cannot have prime solutions even after setting a bounded number of variables to zero is necessary, consider for example the equation: 
	\[
	x_1-4x_2+\ldots +(-1)^{s-1}4^{s-1}x_s=0.
	\]
	If we look for solutions that are either primes or zero, then the only such solution is $x_1=\ldots=x_s=0$. However of we scale the coefficients with power of 4 we can reduce the equation to $p_1-p_2\ldots+(-1)^{s-1}p_s=0$ which have many prime solutions.
	To count almost-prime solutions for general system of odd-degree forms, our strategy proceeds in three main stages: regularization, finding non-singular local solutions and scaling. A system of odd-degree forms both have to be regular that is of sufficiently high rank and have non-vanishing reduced local factors, in order to have many solutions consisting of primes \cite{CookMagyarPrimes,yamagishi2022diophantine,liu2024forms}
	
	Thus first we apply a regularity decomposition to partition the solution space and extract an auxiliary system that is of sufficiently high rank. To guarantee this for the linear part of the system we need to set a bounded number of variables to be equal to zero. This yields a high rank system in $s'$ variables, including the linear forms of the system.
	
	Once this high-rank system is obtained, we use some facts from analysis and geometry to show the existence of non-singular $p$-adic solutions for a finite set exceptional primes $p$. With the aid of these non-singular solutions we carefully select a scaling vector $\mathbf{y}=(y_1,\ldots,y_{s'})$ and introduce the almost-prime constraint by restricting the variables to the form $x_i = y_i q_i$ yielding a new system of equations $\mathcal{F}_{\mathbf{y}}(\mathbf{q}) = 0$ in the prime variables $\mathbf{q} = (q_1, \ldots, q_{s'})$.
	
	The final system will have non-vanishing reduced local factors for all primes, and will have sufficiently large rank to utilize the Hardy-Littlewood circle method to estimate the number of prime solutions $\mathbf{q}$ providing the lower bounds stated in Theorems 2.1 and 2.2.

	\section{The main pillars of the proof.}
	
	We describe here the three main ``pillars" of the proof starting with the regularity decomposition of systems of integral forms. This procedure originates in the 1985 paper of Schmidt \cite{SchmidtDensity} and was made explicit by Cook and the author \cite{CookMagyarPrimes} where it was used to study prime solutions of diophantine systems of a fixed degree. This was extended by Yamagishi \cite{YamagishiPrimesDifferingDegrees} for systems of forms of differing degrees. However, as we need to stay in the framework of odd degree systems, we will state the regularization lemma accordingly.
	
	Let recall the definition of the rational Schmidt rank (also referred to as $h$-invariant or  strength). Let $f \in \mathbb{Q}[x_1, \ldots, x_s]$ be a form of degree $\ell \ge 2$. The Schmidt rank $h(f)$ is defined as the smallest non-negative integer $h$ such that $f$ can be written identically as
	\[ f = \sum_{i=1}^h u_i v_i \]
	where $u_i, v_i$ are forms of strictly positive degree in $\mathbb{Q}[x_1, \ldots, x_s]$. For a system of forms $\mathcal{F} = (f_1, \ldots, f_r)$ all of the same degree $\ell \ge 2$, the $h$-invariant is defined as the minimum rank of any non-trivial rational linear combination of the forms:
	\[ h(\mathcal{F}) = \min_{\mathbf{c} \in \mathbb{Q}^r \setminus \{\mathbf{0}\}} h\left( c_1 f_1 + \dots + c_r f_r \right). \]
	For a linear form $f(x)=a_1x_1+\ldots +a_sx_s$ we define $h(f)$ as the number of non-zero coefficients $a_i$. For a mixed-degree system $\FF=(\FF^{(d)},\ldots,\FF^{(1)})$ we define 
	\[
	h(\FF):=\min_{1\leq l\leq d} h(\FF^{(l)}).
	\]
	
	
	\begin{prop}[Odd-Degree Regularity Lemma] \label{prop:regularity}
		Let $d>1$ be an odd integer, and let $\mathcal{H}$ be a collection of functions $\mathcal{H}_{i}:\mathbb{Z}_{\ge 0}^2 \rightarrow \mathbb{Z}_{\ge 0}$, for $3 \le i \le d$, which are non-decreasing in each variable. For a collection of non-negative integers $r_{1}, \ldots, r_{d}$, there exist constants $C_{1}, \ldots, C_{d}$ (depending only on the functions $\mathcal{H}_i$ and the initial parameters $r_i$) such that the following holds.
		
		Given a system of forms $\mathcal{F}$ in $\mathbb{Z}[x_{1}, \ldots, x_{s}]$ consisting strictly of odd-degree forms bounded by $d$, let $r_\ell$ be the number of degree $\ell$ forms in $\mathcal{F}$. There exists an auxiliary system of forms $\mathcal{G}$ in $\mathbb{Q}[x_{1}, \ldots, x_{s}]$, also consisting strictly of odd-degree forms bounded by $d$, satisfying the following. For each $1 \le \ell \le d$, let $r_{\ell}^{\prime}$ be the number of degree $\ell$ forms in $\mathcal{G}$, let $\mathcal{G}^{(\ell)}$ denote the subsystem of degree $\ell$ forms, and let $R^{\prime}=r_{1}^{\prime}+ \cdots +r_{d}^{\prime}$.
		
		\begin{enumerate}
			\item \textbf{(Zero Locus Inclusion)} The common zero locus of the system $\mathcal{G}$ is contained in the common zero locus of $\mathcal{F}$. That is, $V_0(\mathcal{G}) \subseteq V_0(\mathcal{F})$. 
			
			\item \textbf{(Bounded Complexity)} For each $1 \le \ell \le d$, the number of forms $r_{\ell}^{\prime}$ is at most $C_{\ell}(r_{1}, \ldots, r_{d}, \mathcal{H})$.
			
			\item \textbf{(High Schmidt Rank)} For each $3 \le \ell \le d$, we have $h(\mathcal{G}^{(\ell)}) \ge \mathcal{H}_{\ell}(R^{\prime},d)$. Moreover, the linear forms of $\mathcal{G}^{(1)}$ are linearly independent over $\mathbb{Q}$.
		\end{enumerate}
	\end{prop}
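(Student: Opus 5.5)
We argue by a downward induction on degree, following Schmidt's regularization procedure as made explicit in \cite{CookMagyarPrimes} and \cite{YamagishiPrimesDifferingDegrees}, with one extra constraint: every form we introduce must itself have odd degree. The central object is the \emph{degree vector} $\mathbf{r}=(r_d,r_{d-2},\ldots,r_3,r_1)$ of the current system, ordered lexicographically from the top degree down. A regularization step will replace the system $\mathcal{G}$ by a new system $\mathcal{G}'$ satisfying three things:
\begin{itemize}
\item[(a)] $V_0(\mathcal{G}')\subseteq V_0(\mathcal{G})$;
\item[(b)] the degree vector of $\mathcal{G}'$ is lexicographically smaller;
\item[(c)] all new entries are bounded in terms of the old ones and $\mathcal{H}$.
\end{itemize}
Since lexicographically decreasing sequences of vectors in $\mathbb{Z}_{\ge 0}^{(d+1)/2}$ with controlled growth in the lower coordinates terminate after a bounded number of steps, the process stops. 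Iterating the bounds gives the constants $C_\ell$, which depend only on the $r_i$ and $\mathcal{H}$. The process stops exactly when the rank condition (3) holds, and (1) follows by chaining the inclusions (a). We start from $\mathcal{G}=\mathcal{F}$.

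\textbf{The regularization step.} Suppose that for some odd $\ell\ge 3$ we have $h(\mathcal{G}^{(\ell)})<\mathcal{H}_\ell(R',d)$, where $R'$ is the current total number of forms, and take the largest such $\ell$. Then some nontrivial rational combination satisfies
\[
g:=\sum_j c_j g_j^{(\ell)}=\sum_{i=1}^{h} u_iv_i, \qquad h<\mathcal{H}_\ell(R',d).
\]
After a change of basis of $\mathrm{span}\,\mathcal{G}^{(\ell)}$ we may assume $g$ is one of the forms, and we delete it. In each product $u_iv_i$ the degrees satisfy $\deg u_i+\deg v_i=\ell$, which is odd, so exactly one factor has odd degree, say $u_i$, with $\deg u_i<\ell$. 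We add all the $u_i$ to the system. The new system vanishes wherever the old one does, since on the zero set of the remaining forms and of the $u_i$ we get $g=\sum u_iv_i=0$; this gives (a). The number of degree-$\ell$ forms drops by one, forms of degree $>\ell$ are unchanged, and at most $h<\mathcal{H}_\ell(R',d)$ forms of odd degree $<\ell$ are added; this gives (b) and (c).

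This parity observation is the key point. It is exactly why odd degrees let us discard the even-degree factors $v_i$ and never leave the odd-degree framework, which is Birch's and Schmidt's trick. We allow rational coefficients throughout, which is why $\mathcal{G}$ lives in $\mathbb{Q}[\ux]$.

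\textbf{Linear forms.} When all forms of degree $\ge 3$ have high rank, we replace $\mathcal{G}^{(1)}$ by a $\mathbb{Q}$-basis of its span. This has the same zero locus and does not increase $r_1$, which gives the linear independence in (3). Since linear forms are at the bottom of the order, this step never disturbs the higher-degree forms.

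\textbf{Main obstacle.} The real difficulty is the bookkeeping for the constants. The threshold $\mathcal{H}_\ell(R',d)$ depends on the \emph{final} total count $R'$, which changes as forms are added. The way around this is to always test the rank against the current total $R'$. This works because $\mathcal{H}_\ell$ is non-decreasing and because the terminal system's $R'$ is the current one when the process stops. The termination bound is then proved by nested induction on $(r_d,\ldots,r_3)$: define $C$ recursively by
\[
C(r_d,\ldots,r_\ell,\ldots)\le C\big(r_d,\ldots,r_\ell-1,\; r_{\ell-2}+\mathcal{H}_\ell(\cdot),\;\ldots\big),
\]
where the argument of $\mathcal{H}_\ell$ is bounded by the current total. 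Each such recursion is well-founded.
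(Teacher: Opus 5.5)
Your proof is correct and is essentially the paper's argument. Both rest on Schmidt's parity observation that each product $u_iv_i$ has exactly one factor of odd degree $<\ell$; one adjoins those factors, drops one degree-$\ell$ form, and reduces the linear forms to a basis at the end. You organize this as a lexicographic descent with ranks tested against the current total $R'$ (using that $\mathcal{H}_\ell$ is non-decreasing), whereas the paper uses a double induction on $(d,r_d)$ that first regularizes the lower-degree part with adjusted growth functions. One wording slip: in (a) it is the old system that vanishes wherever the new one does, i.e.\ $V_0(\mathcal{G}')\subseteq V_0(\mathcal{G})$, which is exactly what your justification proves.
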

	
	Proposition 3.1 in essence follows by arguments of Schmidt \cite[Theorem 1]{SchmidtDensity}, for the sake of completeness we sketch the proof in an appendix following \cite{CookMagyarPrimes}.
	
	Let us remark that we can also require that for each $3 \le \ell \le d$, there exists a constant $C_\ell(R,d,\HH)$ depending on the parameters $R,d$ and the growth functions $\HH$, such that $r^\prime\leq C_\ell(R,d,\HH)$, simply taking $C_\ell(R,d,\HH):=\max_{r_1 +\ldots r_d=R}C_\ell\prime(r_1,\ldots,r_d,d,\HH)$.  
	
	As originally observed by Schmidt \cite{SchmidtDensity}, this proposition is obtained by exploiting the recursive nature of the Schmidt rank. Any small rank an odd-degree form is decomposed as $f = \sum_{i=1}^h u_i v_i$, parity dictates that for every $i$, exactly one of the factors $u_i$ or $v_i$ must have an odd degree. By collecting only these odd-degree factors to generate the subsequent auxiliary systems, we guarantee both the pure odd-degree structure of $\mathcal{G}$ and the zero locus inclusion, see the Appendix.
	
	
	The second main result is a theorem providing an asymptotic for the number of prime solutions to diophantine systems of sufficiently high Birch rank. This has been obtained in \cite{CookMagyarPrimes} when all forms of the same degree and has been extended to mixed-degree systems by Yamagishi \cite{yamagishi2018prime}.

	Let $d>1$ and let
	\[
	\FF=(f_1,\ldots,f_d)
	\]
	be a system of polynomials in $\mathbb{Z}[x_1,\ldots,x_s]$, where for each $1< \ell\le d$ we write
	\[
	\FF^{(\ell)}=(f_{\ell,1},\ldots,f_{\ell,r_\ell})
	\]
	for the subsystem consisting of the degree-$\ell$ polynomials of $f$. 
	
	We recall that the Birch rank of the mixed-degree system $\FF$ is defined as 
	\[
	B(\FF):=\min_{1\leq l\leq d} B_l(\FF^{(\ell)})
	\]
	where the Birch rank of the degree-$\ell$ block $\FF^{(\ell)}$ is defined as 
	\[
	B_l(\FF^{(\ell)}) =codim\,\big\{\mathbf{z}\in \C^s: rank\,\bigg(\frac{\partial f_{\ell,i}}{\partial x_j}(\mathbf{z})\bigg) < r_l\big\},
	\]
	i.e. as the co-dimension of the singular locus $V^\ast (\FF^{(\ell)})$ of the homogeneous system $\FF^{(\ell)}$. We remark that for a single form $f$ we have that 
	$V^\ast (f)=\{\mathbf{z}\in \C^s: \nabla f(\uz)=0\}$. For $\ell=1$ we define $B_1(f)$ as above i.e. as the number of non-zero coefficients of the linear form $f$.
	
	Define the logarithmically weighted number of integer solutions $\ux\in [N]^s$ of the system $\FF(\ux)=0$,
	\[
	M_\FF(N):=\sum_{x\in[N]^s}\Lambda(x_1)\cdots\Lambda(x_s)\,\mathbf{1}_{\{\FF(x)=0\}},
	\]
	where $\Lambda$ is the von Mangoldt function.
	
	\begin{thm}[Yamagishi {\cite[Theorem~1.2]{yamagishi2018prime}}]
		There exists a function $\mathcal{R}_0(\ur,d)$ depending only on the variables $\ur=(r_1,\ldots,r_d)$ and $d$ such that the following holds. If
		$B_\ell(f_\ell)\geq \mathcal{R}_0(\ur,d)$ for all $1\leq \ell\leq d$ the there exist a constant $C(\FF)$ depending only on $\FF$ and a constant
		$c>0$ such that, as $X\to\infty$, such that
		\[
		M_\FF(N)=C(\FF)\,N^{\,s-D}+O\!\left(\frac{N^{\,s-D}}{(\log N)^c}\right),
		\]
		with $D:=\sum_{\ell=1}^d \ell\, r_\ell$.
		Moreover, $C(\FF)>0$ provided that the system has a non-singular real solution in $(0,1)^s$, 
		and for every prime $p$ the system also has a non-singular solution in $(\mathbb{Z}_p^\times)^s$, where $\Z_p^\times$ is the set of $p$-adic units.
	\end{thm}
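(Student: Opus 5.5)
This is Yamagishi's theorem, quoted from the literature, so the plan below only reconstructs the standard circle-method argument in prime variables; I would not rederive it in full here.

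We write $M_\FF(N)=\int_{\mathbb{T}^R} S(\alpha)\,d\alpha$ with the exponential sum
\[
S(\alpha)=\sum_{\ux\in[N]^s}\Lambda(x_1)\cdots\Lambda(x_s)\, e\Big(\sum_{\ell}\sum_{i}\alpha_{\ell,i} f_{\ell,i}(\ux)\Big).
\]
We dissect $\mathbb{T}^R$ into major arcs $\mathfrak{M}$ and minor arcs $\mathfrak{m}$. The major arcs are taken with a power-of-$\log N$ width, so that Siegel--Walfisz is available: for $\alpha_\ell$ within $(\log N)^{B}N^{-\ell}$ of $a_\ell/q$, with $q\le(\log N)^B$.

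The minor-arc bound is the main obstacle. The Birch-rank hypothesis is what converts Weyl differencing into a bound. Following Birch and Cook--Magyar, one shows that if $|S(\alpha)|$ is large, then many multilinear forms attached to the top-degree block vanish. By the rank condition, this forces $\alpha_d$ to be close to a rational with small denominator.

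The von Mangoldt weights are handled in one of two ways. The first is to decompose $\Lambda$ via Vaughan's identity or a Heath-Brown type identity into type I and type II sums, and to apply the differencing argument to each. The second, used by Cook--Magyar, is to bound $\Lambda\le\log N$ on primes after a Cauchy--Schwarz step, which reduces matters to the integer Weyl inequality with a loss of $(\log N)^{C}$. A saving of $N^{-\delta}$ then dominates this loss.

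The mixed-degree case needs an extra device. We fix the top-degree frequencies and treat the lower-degree phases as perturbations: the differencing eliminates them, since differencing $d-1$ times kills all forms of degree below $d$. We then iterate downward in $\ell$. Sufficiently large $\mathcal{R}_0(\ur,d)$ makes every minor-arc contribution $O(N^{s-D}(\log N)^{-c})$.

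On the major arcs, Siegel--Walfisz lets us approximate $S(a/q+\beta)$ by $\phi(q)^{-s}S^*(q,a)\,I(\beta)$. Here $S^*$ is a complete sum over reduced residues and $I$ is the singular integral. Summing over $q$ and integrating over $\beta$ gives $\mathfrak{S}\,\mathfrak{J}\,N^{s-D}$, where $\mathfrak{J}$ is scaled using the degrees. The tails are controlled by the decay bounds $|S^*(q,a)|\ll q^{s-\kappa}$ and $|I(\beta)|\ll(1+|\beta|)^{-\kappa'}$; these decay bounds again come from the rank hypothesis.

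For positivity, we factor the singular series as $\mathfrak{S}=\prod_p\chi_p$. Absolute convergence handles the product over large $p$. For each $p$, $\chi_p$ is the limit of normalized counts of solutions mod $p^k$ in units. A non-singular solution in $(\Z_p^\times)^s$ lifts by Hensel's lemma, which gives $\chi_p>0$. Similarly, a non-singular real solution in $(0,1)^s$ gives $\mathfrak{J}>0$ via the implicit function theorem.
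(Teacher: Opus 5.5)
The paper gives no proof of this statement. It is quoted from Yamagishi's work, so there is no internal argument to compare with. Your major-arc analysis, and the positivity of $\chi_p$ and $\mathfrak J$ via Hensel's lemma and the implicit function theorem, are the standard ones.

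\textbf{The step that fails.} Your second way of handling the von Mangoldt weights cannot work: bounding $\Lambda\le\log N$ after a Cauchy--Schwarz step and falling back on the integer Weyl inequality. Such an argument uses nothing about primes beyond $0\le\Lambda\le\log N$. It would therefore give the same minor-arc saving for every weight with that property, and that is false.

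For a counterexample, fix a minor-arc $\alpha$ for which $\alpha x^d$ ($x\le N$) is equidistributed modulo $1$; for instance, take $\alpha$ with a rational approximation of denominator about $N^{d/2}$. Put $w(x)=(\log N)\,\mathbf 1\{\|\alpha x^d\|\le 1/\log N\}$ and $T(\alpha)=\sum_{x\le N}w(x)e(\alpha x^d)$. Then $0\le w\le\log N$, $w$ is supported on a set of density about $2/\log N$, and every nonzero term has phase within $2\pi/\log N$ of $0$. Hence $|T(\alpha)|\ge\cos(2\pi/\log N)\sum_{x\le N}w(x)$.

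Now take $F=x_1^d+\cdots+x_k^d-x_{k+1}^d-\cdots-x_{2k}^d$. Its singular locus is $\{0\}$, so its Birch rank is maximal. The weighted exponential sum equals $T(\alpha)^k\,\overline{T(\alpha)}^{\,k}$, whose absolute value $|T(\alpha)|^{2k}$ is essentially the trivial bound: there is no saving at all, let alone $N^{-\delta}$.

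So the minor arcs need genuinely arithmetic input about the primes. One example is a Vaughan or Heath-Brown decomposition into bilinear sums, then Cauchy--Schwarz to remove one coefficient sequence, then Birch-type differencing of the resulting unweighted sums with control of their rank. Your first route names this, but it is where the substance of the theorem lies, not an interchangeable alternative.

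\textbf{Two further points you gloss over.}

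First, your major arcs have width $(\log N)^B$, so your minor arcs contain $\alpha$ close to $a/q$ with $(\log N)^B<q\le N^{\theta}$. On that range no Weyl-type bound gives a saving $N^{-\delta}$. You have to run Birch's dichotomy with $N^{\theta}$ as small as a power of $\log N$, which gives only a saving of the form $(\log N)^{-cB}$ there. You then take $B$ large, as Siegel--Walfisz permits, so that this beats the logarithmic losses.

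Second, in the mixed-degree setting, differencing away the lower-degree terms does not handle the case where $\alpha_d$ lies on a major arc but some lower-degree frequency does not. There the top-degree phase must first be frozen, on residue classes modulo $q$ and short boxes, before the lower-degree part is differenced. This is where the differing-degree case needs real additional work.
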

	
	It follows by standard means \cite{yamagishi2018prime,liu2024forms} that under the same conditions the diophantine system $\FF(\ux)=0$ has $C(\FF)\,(N^{\,s-D}/(\log N)^s)\,(1+o(1))$ prime solutions $\ux\in (\PP\cap [N])^s$.
	
	Again, the function $R_0(\ur,d)$ can be replaced by a function $\RR_0(R,d)$ with $R=r_1+\ldots +r_d$, as we have remarked after the regularity decomposition. 
	
	At this point two main issues remains. One is that the odd-degree regularity decomposition guarantees that the auxiliary system $\mathcal{G}$ possesses a large rational Schmidt rank (strength). However, asymptotic results for prime solutions, such as Yamagishi's theorem above, are classically formulated under the assumption of a large Birch rank. To bridge this gap, one can appeal to a recent result of Baily and Lampert \cite{baily2024strength}, who established that the strength of a system of forms is bounded linearly by its Birch rank, namely the following. 
	
	\begin{thm}[Baily-Lampert{\cite[Lemma 2.12]{baily2024strength}}] Let $\FF=(f_1,\ldots,f_R)$ be a system of rational forms of degree at most $d$. Then one has 
		\[
		h(\FF)\leq C(d)R\,(B(\FF)+R-1)
		\]
		where one may take $C(d)=4^d d^2$
	\end{thm}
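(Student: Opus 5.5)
The plan is to reduce the system to a single form in two stages: first from the Jacobian-rank locus of $\FF$ to the singular locus of a single linear combination, then from the codimension of a singular locus to the strength of one form.

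\textbf{Reduction to a single form.} Since $h(\FF)$ and $B(\FF)$ are both minima over the degree blocks, it suffices to treat a block $\FF^{(\ell)}$ of $r_\ell\le R$ forms of a fixed degree $\ell$. We may assume $\ell\geq 2$: for linear forms both invariants count nonzero coefficients and the bound is immediate. Put $b=B_\ell(\FF^{(\ell)})$. Let $V^\ast=V^\ast(\FF^{(\ell)})$ be the locus where the Jacobian has rank $<r_\ell$. For $\uz\in V^\ast$ there is a $[\mathbf c]\in\PP^{r_\ell-1}(\C)$ with $\nabla(\mathbf c\cdot\FF^{(\ell)})(\uz)=0$. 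Thus $V^\ast$ is the projection of the incidence variety
\[
\{([\mathbf c],\uz):\nabla(\mathbf c\cdot \FF^{(\ell)})(\uz)=0\}.
\]
A dimension count on this incidence variety gives a $\mathbf c$ with
\[
\operatorname{codim}\operatorname{Sing}(\mathbf c\cdot\FF^{(\ell)})\leq b+r_\ell-1.
\]
This is where the additive term $R-1$ comes from.

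\textbf{Strength of one form from its singular locus.} Next I would show that a single form $g$ of degree $\ell$ whose singular locus $\{\nabla g=0\}$ has codimension $b'$ satisfies $h(g)\le C(\ell)\,b'$. The plan is induction on the degree. By Euler's identity, $\ell g=\sum_j x_j\partial_jg$. The partials $\partial_j g$ have degree $\ell-1$ and cut out a variety of codimension $b'$. Using the polarized multilinear form of $g$ and restricting to a generic linear subspace of codimension $O(b')$, one writes $g$ modulo an ideal generated by $O(b')$ forms of lower degree. Each step of the recursion costs a factor of order $4\ell^2$, and iterating produces the constant $4^d d^2$. Combining this with the first step bounds the strength of the combination $\mathbf c\cdot\FF^{(\ell)}$. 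The remaining factor $R$ in the bound I expect to absorb a loss when passing from the single combination back to $h(\FF)$ as defined for the block.

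\textbf{Main obstacle: rationality.} The combination $\mathbf c$ produced geometrically may be non-rational, and the decomposition is a priori over $\C$ (or over $\overline{\mathbb Q}$). The invariant $h(\FF)$, however, is defined with rational $\mathbf c$ and rational factors $u_i,v_i$.

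For the coefficient vector, the first thing I would try is this. The set of $[\mathbf c]$ attaining the small singular-locus codimension is a $\mathbb Q$-defined constructible set. Either it contains a rational point, or, by a Galois-averaging argument over its conjugates, one passes to a rational combination at the cost of another factor of $R$.

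For the factors, I would first try to apply known field-independence results for strength. These say that the strength of a form over $\overline{\mathbb Q}$ controls its strength over $\mathbb Q$ up to a constant depending only on the degree. This would be invoked as a black box, and the constant it loses would have to be tracked and absorbed into $C(d)$.
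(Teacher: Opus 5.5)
The paper gives no proof of this statement; it quotes it from Baily--Lampert. So your outline has to stand on its own, and it has a genuine gap: the reduction steps are correct, but the two inputs that carry the content are not proved.

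\textbf{What is sound.} The block reduction is fine. So is the incidence count, which produces a $\mathbf c$ with $\operatorname{codim}\operatorname{Sing}(\mathbf c\cdot\FF^{(\ell)})\le b+r_\ell-1$; you may take $\mathbf c$ over $\overline{\mathbb Q}$, since the locus of such $[\mathbf c]$ is defined over $\mathbb Q$. Galois averaging also works. The conjugates $\mathbf c^\sigma$ span a Galois-stable, hence $\mathbb Q$-rational, subspace of dimension at most $r_\ell$. So some nonzero rational $\mathbf c_0$ is a combination of at most $r_\ell$ conjugates, and $\operatorname{Sing}(\mathbf c_0\cdot\FF^{(\ell)})$ contains the intersection of their singular loci. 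That intersection has codimension at most $r_\ell(b+r_\ell-1)$.

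This is exactly where the factor $R$ comes from. Passing back to $h(\FF)$, which is a minimum over rational combinations, costs nothing, so there is no ``another factor of $R$''. In effect, your bookkeeping reduces the statement, with the same constant, to its case $R=1$: $h(g)\le C(d)\operatorname{codim}\operatorname{Sing}(g)$ for a single rational form $g$. That case is the entire difficulty, and you prove neither of the two pieces you split it into.

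\textbf{First input: the single-form bound over $\overline{\mathbb Q}$.} Your sketch does not deliver $h_{\overline{\mathbb Q}}(g)\le C(\ell)\operatorname{codim}\operatorname{Sing}(g)$.
\begin{itemize}
\item Reducing $g$ modulo the $k$ linear forms cutting out a generic subspace $A$ costs $k$ but leaves $g|_A$. Since $\operatorname{Sing}(g|_A)\supseteq\operatorname{Sing}(g)\cap A$ still has codimension at most $b'$ in $A$, you face the same problem in fewer variables.
\item Euler's identity only places $g$ in the ideal generated by all $s$ partials.
\item A bound $h(g)\le k$ means exactly that $g$ lies in an ideal generated by $k$ forms of lower positive degree. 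Nothing in your sketch produces $O(b')$ such forms from the codimension hypothesis.
\end{itemize}
This is a substantial theorem (in characteristic $0$ it goes back to Schmidt) and needs a real proof. Your constant is also inconsistent: a loss of $4\ell^2$ per degree step compounds to order $4^d(d!)^2$, not $4^dd^2$.

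\textbf{Second input: descent from $\overline{\mathbb Q}$-strength to $\mathbb Q$-strength.} This is not a routine black box. It must be linear, or the linear shape of the bound is lost. Moreover, it is itself a consequence of the $R=1$ case. If $g=\sum_{i\le h}u_iv_i$, then $\nabla g$ vanishes on $\{u_i=v_i=0\ \forall i\}$, so $B(g)\le 2h_{\overline{\mathbb Q}}(g)$. Combined with the $R=1$ case this gives $h(g)\le 2C(d)\,h_{\overline{\mathbb Q}}(g)$. Invoking descent therefore outsources much of what is to be proved. Absorbing its unspecified constant also means you cannot arrive at $C(d)=4^dd^2$.
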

	
	The second main difficulty is to show the existence of non-singular solutions $\Z_p^*$ for all primes $p$ which will be the subject of Section 5.
	
	\section{Regularity and the exceptional set of zeroes.}
	
	Let $\HH_\ell(R,d)$ for $1\leq \ell\leq d$ ($\ell$ odd) be a family of growth functions on $\Z_{\geq 0}^2$. We will not yet specify them but rather we will derive some lower bounds on them which can be expressed in terms of the initial parameters of the system $\FF$. We also choose a function $\mathcal{\RR}(R,d)$ which is sufficiently larger than the function $\mathcal{\RR}_0(R,d)$ in Theorem 3.1, so that all our systems after the regularization and various further manipulations, will have Birch rank at least $\mathcal{\RR}_0(R,d)$.
	
	Suppose we apply Proposition 3.1 to the initial system $\FF$ consisting of $R$ forms of odd degrees at most $d$. The we get a new system $\GG$ of $R^\prime$ of forms of odd-degree at most $d$. If there would be non-singular solutions in $(\Z_p^\times)^s$ for all $p\in \PP$ and if we would also have that $h(\GG^{(1)})=B_1(\GG^{(1)})\geq \RR(R'd)$ the we could apply Theorem 3.1 which would yield many prime solutions. 
	
	However, we only know that the linear forms in $\GG^{(1)}=(l_1,\ldots,l_{r_1^\prime})$ are linearly independent so $B_1(\GG^{(1)})\geq 1$. We first address this issue. Suppose there exists a non-trivial linear combination $l:=\la_1 l_1+\ldots +\la_{r_1^\prime} l_{r_1^\prime}$ has rank less that $\RR(R'd)$. The we set all variables $x_i$ corresponding to a non-zero coefficient of the linear form $l$ to be zero. We restrict the system $\GG$ to the subspace where all such coordinates $x_i=0$. On this subspace $l=0$ hence one of the linear forms, say $l_{r_1^\prime}$ is a linear combination of the remaining linear forms and can be omitted from the system without increasing the common zero set of the system $V_\GG$. Note that the Schmidt rank of the system decreases by at most 1, when the system is restricted to a coordinate hyperplane, say $x_1=0$. Indeed any form $f(x_1,\ldots,x_s)$ can be written as $f(x_1,\ldots,x_s)=x_1 g(x_1,\ldots,x_s)+h(x_2,\ldots,x_s)$. Thus the restricted system $\GG^\prime$ will satisfy $h(\GG'^{(l)})\geq H_l(R',d)-\RR(R',d)$ for all $3\leq l\leq d$. We repeat the procedure if there is still a linear combination of the remaining linear forms that has rank less than $\RR(R',d)$; until we have removed all small rank linear combinations of the system $\GG$.
	The we arrive to a system $\GG^\prime$ defined on $s'\geq s-\RR(R',d)R'$ variables such that 
	\begin{equation}\label{4.1}
		B_1(\GG'^{(1)})\geq \RR(R',d) 
	\end{equation}
	where $R'\leq R$ is the number of forms of the system $\GG^\prime$.
	
	We also do some preparations for the second and more substantial issue, that is to find non-singular $p$-adic solutions with non-vanishing coordinates. Note that the solution set $V_\GG^\prime$ has co-dimension $R'$ over $\C$ as it has a small dimensional singular locus. Initially, we may have that some maximal $R'$-codimensional irreducible components that are contained in some coordinate-hyperplane $H_1=\{x_{i_1}=0\}$. If this is the case set $x_i=0$ and restrict the variety to $V_{\GG^\prime}$ to $H_i$. Note that its co-dimension drops by 1. Then we look at the restricted variety $V_{\GG_1}$. If it has a max dimensional irreducible component still contained in a coordinate hyperplane $H_2$ restrict $V_{\GG_1}$ to that hyperplane obtaining $V_{\GG_2}$ in $H_2$ of co-dimension $R'-2$. We continue the procedure as long as this is the case, but we cannot have more that $R'$ steps, as after $R'$ steps the variety inside the ambient space will have co-dimension 0, so it cannot be contained in any coordinate hyperplanes. That means that setting at most $R'\leq C(d,R)$ coordinates to zero, none of the maximal dimensional irreducible components will be contained in a coordinate hyperplane. We will assume this from now on.
	
	Let $J_F\subs [s]$ be the set of all indices $i\in [s]$ such that we set $x_i=0$ during these two procedures. The set $J_\FF$ will be the exceptional set of zero coordinates in Theorem 2.2. Write $\ux=(\ux',\uz)$ where $\uz=(x_i)_{i\in J_\FF}$. If $g(\ux',0)=0$
	for all $g\in \GG^\prime$ then $g'(\ux'):=g(\ux',0)=0$ for all $g\in \GG$ including the linear forms in $\GG$, and hence $f(\ux',0)=0$ for all $f\in\FF$ for the initial system. Thus it will be enough to show that there exists a constants $Y_{\GG^\prime}>0$ and $C_{\GG^\prime}>0$ such that if $\NN^\ast_{\GG^\prime}(N,Y)$ denote the number of integer solutions $\ux'\in [-N,N]^{s'}$ to he system of equations $g'(\ux')=0$ for all $g'\in \GG^\prime$ such that every coordinate $x_i=y_ip_i$ with $1\leq y_i\leq Y$ and $p_i\in\PP$, then 
	\begin{equation}\label{4.2}
		\NN^\ast_{\GG^\prime}(N,Y) \geq C_{\GG^\prime}\ 
		\frac{N^{s'-D}}{(\log\,N)^{s'}},
	\end{equation}
	for some constants $Y=Y_{\GG^\prime}$ and $D=D(R',d)$. Thus, as $s'\geq s-\RR(R',d)R'\geq s-s(R,d)$ (as $R'\leq C(R,d)$ by Theorem 3.1)..
	
	We also have for $3\leq l\leq d$, 
	\[
	h(\GG'^{(\ell)})\geq H_l(R',d)- \RR(R',d)R'
	\]
	Thus by Theorem 3.2 we have that 
	\begin{equation}\label{4.3}
		B_l(\GG'^{(l)})\geq \HH_\ell'(R',d):=c(d)\big((R')^{-1} H_l(R',d)-\RR(R',d)-1\big),
	\end{equation}
	with $c(d)=4^{-d}d^{-2}$.
	
	Thus in effect we have reduced proving Theorem 2.2 to counting almost prime solutions of the regular system $\mathcal{G}$ consisting of $R' \le C(R, d)$ odd-degree forms in $s'$ variables. We ensure that our target rank threshold $\mathcal{R}(R', d)$ is chosen to be sufficiently large to satisfy not only the prerequisites of Yamagishi's global theorem, but also the local finite field rank conditions $B_0(R',d)$ that will be required in order to find non-singular solutions above $\F_p$, in Section 5. To formalize this, we state the following theorem for the system $\mathcal{G}'$. 
	
	\begin{thm}\label{thm:regularized_main}
		Let $\mathcal{G}'$ be the reduced system of $R'$ forms in $s'$ variables constructed above, satisfying the Birch rank conditions $B_\ell(\mathcal{G}'^{(\ell)}) \ge \mathcal{R}(R', d)$ for all $1 \le \ell \le d$. There exists a constant $Y_{\mathcal{G}'} > 0$ and strictly positive integers $y_i \le Y_{\mathcal{G}'}$ (for $1 \le i \le s'$) such that the lower bound \eqref{4.2} holds, i.e.
		\[
		\NN^\ast_{\GG^\prime}(N,Y) \geq C_{\GG^\prime}\ 
		\frac{N^{s'-D_{\GG^\prime}}}{(\log\,N)^{s'}}
		\]
		with $D_{\GG^\prime} = \sum_{\ell=1}^d \ell r'_\ell \le d R'$. 
	\end{thm}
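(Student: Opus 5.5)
We reduce the statement to Yamagishi's theorem (Theorem 3.1) applied to a rescaled system in prime variables. For a vector $\mathbf{y}=(y_1,\ldots,y_{s'})$ of positive integers set $\GG'_{\mathbf{y}}(\mathbf{q}):=\GG'(y_1q_1,\ldots,y_{s'}q_{s'})$. Each prime solution $\mathbf{q}\in(\PP\cap[N/Y])^{s'}$ of $\GG'_{\mathbf{y}}(\mathbf{q})=0$ gives a solution $\ux'=(y_iq_i)$ counted by $\NN^\ast_{\GG'}(N,Y)$ whenever $\max y_i\le Y$. Rescaling variables by nonzero constants is an invertible diagonal linear change of variables over $\C$. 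It therefore preserves the codimension of the singular loci, and hence $B_\ell(\GG_{\mathbf{y}}'^{(\ell)})=B_\ell(\GG'^{(\ell)})\geq \RR(R',d)\ge\RR_0(R',d)$; for linear forms the number of nonzero coefficients is unchanged as well. By Theorem 3.1 and the remark following it, it then suffices to choose $\mathbf{y}$ with $Y$ depending only on $\GG'$ such that $\GG'_{\mathbf{y}}$ has a non-singular real zero in $(0,1)^{s'}$ and a non-singular zero in $(\Z_p^\times)^{s'}$ for every prime $p$. The count is then $\gg (N/Y)^{s'-D}/(\log N)^{s'}$ with $D=\sum \ell r'_\ell$.

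\textbf{Real place.} All degrees are odd, so $\GG'(-\ux)=-\GG'(\ux)$. The variety $V_{\GG'}(\mathbb{R})$ has a non-singular real point: a generic point on a real $R'$-dimensional family of odd forms has one, by Birch's odd-degree argument combined with the high Birch rank, since the singular locus has large codimension. We move this point into a coordinate-free region. The maximal components are not contained in coordinate hyperplanes, so we can pick a non-singular real point $\uz$ with all $z_i\neq0$. We then take $y_i$ proportional to $|z_i|$ (rationally approximated, using the implicit function theorem to stay on the variety) so that $\mathbf{q}_0=(|z_i|/y_i)$, after scaling, lies in $(0,1)^{s'}$. Sign flips are harmless because we may take $y_i\in\Z\setminus\{0\}$. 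Equivalently we allow signed $y_i$ and note that the count is symmetric.

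\textbf{$p$-adic places.} Split the primes into large and small ones. For $p>P_0(\GG')$ we reduce $\GG'_{\mathbf{y}}$ mod $p$ with $p\nmid\prod y_i$. High Birch rank, with $\RR$ chosen above the threshold $B_0(R',d)$, gives a Lang–Weil count of $\F_p$-points with nonzero coordinates at which the Jacobian has full rank. Such points exist because $V$ mod $p$ has $\asymp p^{s'-R'}$ points, while the coordinate hyperplanes and the singular locus contribute $O(p^{s'-R'-1})$. Hensel's lemma then lifts them to $(\Z_p^\times)^{s'}$. For the finitely many $p\le P_0$ we find a non-singular solution $\uz^{(p)}\in\Z_p^{s'}$ with all coordinates nonzero, using density of non-singular points in $V(\mathbb{Q}_p)$; this follows from the real argument adapted $p$-adically, or from Schmidt/Birch solvability for odd degree over $\mathbb{Q}_p$ with many variables. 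Write $z^{(p)}_i=p^{a_{i,p}}u_{i,p}$ with $u_{i,p}\in\Z_p^\times$. Choose $y_i$ by the Chinese remainder theorem: $y_i\equiv p^{a_{i,p}}\cdot(\text{unit})$ to high $p$-adic precision for each small $p$, and $y_i$ with the required real size and sign. Then $\mathbf{q}=(u_{i,p})$ is a $p$-adic unit solution of $\GG'_{\mathbf{y}}$ up to an error small enough that Hensel's lemma, using non-singularity, corrects it. The primes dividing $y_i$ are exactly the small ones, so the large-prime argument is unaffected.

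\textbf{Main obstacle.} The hard step is the existence of non-singular $\mathbb{Q}_p$ and real points off the coordinate hyperplanes, simultaneously for all small $p$ and the real place. First I would show that $V_{\GG'}(\mathbb{Q}_v)$ contains a non-singular point for each place $v$: odd degree plus high rank gives a smooth point via a Birch-type descent. Second, I would show that near such a point the variety is a $\mathbb{Q}_v$-analytic manifold of dimension $s'-R'$ not contained in any $\{x_i=0\}$, which uses the component reduction performed earlier. A nearby point then has all coordinates nonzero. $Y_{\GG'}$ is the CRT modulus times the real scale, and depends only on $\GG'$.
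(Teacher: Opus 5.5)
Your overall architecture matches the paper's. You rescale by $\mathbf{y}$. At large primes you count $\F_p$-points, remove coordinate hyperplanes and the singular locus, and apply Hensel. At small primes and at $\infty$ you find a non-singular zero with all coordinates nonzero via the component cleanup of Section~4. Then you apply Yamagishi's theorem.

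\textbf{The construction of $\mathbf{y}$.} As written, this step fails. You choose $y_i$ by the Chinese remainder theorem so that $y_i\equiv p^{a_{i,p}}$ to high precision at every $p\in S$ \emph{and} so that $y_i$ has a prescribed real size. Such a $y_i$ will in general have prime factors $\ell\notin S$, so your sentence ``the primes dividing $y_i$ are exactly the small ones'' is unjustified. Neither of your local arguments covers such an $\ell$. Your large-prime argument assumed $\ell\nmid\prod y_i$, and the unit zero $\ux^{(\ell)}$ of $\GG'$ transforms into $(x^{(\ell)}_i/y_i)\notin(\Z_\ell^\times)^{s'}$.

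Both extra conditions are superfluous:
\begin{itemize}
\item At the real place only signs matter. If $\operatorname{sgn}y_i=\operatorname{sgn}z_i$, then $(z_i/y_i)$ is a positive non-singular real zero of $\GG'_{\mathbf y}$, and homogeneity moves it into $(0,1)^{s'}$. Signed $y_i$ are genuinely needed here, since odd degree gives symmetry only under $\ux\mapsto-\ux$, not under individual sign changes; your remark that ``the count is symmetric'' does not justify them.
\item At $p\in S$ no approximation or Hensel correction is needed. With $y_i=\pm\prod_{q\in S}q^{a_{i,q}}$ one has exactly $z^{(p)}_i/y_i=\pm u_{i,p}\prod_{q\in S\setminus\{p\}}q^{-a_{i,q}}\in\Z_p^\times$. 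This zero is non-singular because the Jacobian minors are only multiplied by nonzero constants.
\item For $p\notin S$ every $y_i$ is then a $p$-adic unit, so the large-prime solutions of $\GG'$ transfer directly.
\end{itemize}
This is precisely Lemma~\ref{lem:scaled_system_properties}.

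\textbf{Non-singular zeros at $p\in S$ and at $\infty$.} Your phrase ``odd degree plus high rank gives a smooth point via a Birch-type descent'' is not an argument. A solubility theorem alone produces a nontrivial zero that might lie in the singular locus. The paper's mechanism (Lemma~\ref{lem5.3}, after Brandes and Marmon) is as follows:
\begin{itemize}
\item Take a linear subspace $M$ that meets the singular locus $V^\ast_{\GG'}$ of the \emph{whole} mixed-degree system only at $0$. Over $\mathbb{Q}_p$ take $\dim M=\gamma(R',d)$; over $\mathbb{R}$, $\dim M=R'+1$ suffices.
\item Apply Wooley's local solubility theorem to $\GG'|_M$; over $\mathbb{R}$, use Borsuk--Ulam for the odd map $\GG'|_M$ on the unit sphere.
\item The resulting nontrivial zero avoids $V^\ast_{\GG'}$, so it is automatically non-singular.
\end{itemize}
This requires $\operatorname{codim}V^\ast_{\GG'}$ to be large, which does not follow directly from the blockwise Birch ranks. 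It comes from Lampert's rank-drop lemma. You also tacitly need that lemma when you assert that the singular locus contributes $O(p^{s'-R'-1})$ points mod $p$.

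Your remaining step matches the paper: the local analytic manifold through this point is not contained in any $\{x_i=0\}$, because of the component cleanup.
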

	
	Before proceeding to the proof of Theorem \ref{thm:regularized_main} in Sections 5 and 6, we first demonstrate that it easily implies our main result, Theorem 2.2.
	
	\begin{proof}[Proof that Theorem \ref{thm:regularized_main} implies Theorem 2.2]
		Let $J_F \subset [s]$ be the exceptional set of indices corresponding to the variables eliminated during the linear independence and geometric ``cleanup" procedures above. By construction, $|J_F| \le \mathcal{R}(R', d) R' + R'$, which is a constant bounded in terms of the initial parameters $R$ and $d$. Let $I = [s] \setminus J_F$ denote the indices of the $s'$ surviving (active) variables.
		
		By setting the growth function $\HH_\ell(R,d)$ sufficiently large we can ensure via estimates \eqref{4.1} and \eqref{4.3} that the conditions of Theorem \ref{thm:regularized_main} hold for the restricted system $\mathcal{G}'$ on the variables indexed by $I$. For each valid solution $\mathbf{x}' \in [-N, N]^{s'}$ to $\mathcal{G}'(\mathbf{x}') = 0$, we construct a vector $\mathbf{x} \in \mathbb{Z}^s$ by embedding $\mathbf{x}'$ into the coordinates indexed by $I$, and setting $x_j = 0$ for all $j \in J_F$. 
		
		Because the zero locus of $\mathcal{G}'$ is contained in the zero locus of the original system $\mathcal{F}$ (as guaranteed by Proposition 3.1), and because the variables in $J_F$ were explicitly set to zero to define the restricted system, it follows immediately that $\mathcal{F}(\mathbf{x}) = 0$. 
		
		By Theorem \ref{thm:regularized_main}, there are at least $C_{\GG^\prime}\, N^{s' - D_{\GG^\prime}} / (\log N)^{s'}$ such solutions where $x_i = y_i p_i$ for $i \in I$. As the system $\GG^\prime$ depends only of the initial system $\FF$ the constant $C_{\GG^\prime}>0$ will depend only of the initial system. Padding these vectors with zeros does not change the bounds on the active coordinates. Since $s' \ge s - |J_F|$, we obtain at least
		\[
		\NN_{\FF,J_\FF}(N,Y_\FF)\gg_{\mathcal{F}} \frac{N^{s - |J_F| - D}}{(\log N)^{s - |J_F|}} \gg_{\mathcal{F}} \frac{N^{s - D'}}{(\log N)^s}
		\]
		almost-prime solutions to the original system $\mathcal{F}$, where $D' = D_{\GG^\prime} + |J_F|$ is a constant depending only on $R$ and $d$. This establishes Theorem 2.2.
	\end{proof}
	
	The remainder of this paper is dedicated to proving Theorem \ref{thm:regularized_main}.
	We will write $\GG:=\GG^\prime$, $R:=R'$ and $s:=s'$ for simplicity of notations.
	
	
	\section{Finding non-singular $p$-adic solutions.} If the system $\GG$ possessed non-singular solutions in $(\Z_p^\times)^s$ for all $p\in \PP$  and all of its degree-$l$ blocks have Birch rank $B_l(\GG^{(\ell)})\geq \RR(R,d)$ then we could apply Theorem 3.2 to find sufficiently many solutions already among the primes. We will first show that there exists a threshold $P_\GG$ such that this is indeed the case for all primes $p\geq P_\GG$. For the rest of the primes $p\leq P_\GG$ we first show that there exists a non-singular $\ux=(x_1,\ldots,x_s)$ in $\Z_p^s$ such that $x_i\neq 0$ for all $i\in [s]$.
	
	We start with the case of the large primes where one has good reduction $\pmod{p}$. It is a well-known fact in algebraic geometry that the dimension of a projective algebraic variety$V^\ast$, defined by a finite system $\GG$ integral homogeneous polynomials, is equal over $\C$ and over $\bar{\mathbb{F}}_p$, for all sufficiently large primes $p>P_\GG$. This follows from the generic flatness and constancy of the Hilbert polynomial in flat projective families, see \cite[III. Theorem 9.9]{HartshorneAlgebraicGeometry}. This implies that the Birch rank $B(\GG)$ of the system $\GG$, defined as the co-dimension of the singular locus over $\C$ is equal to the Birch rank of the system $\GG$ over $\bar{\F}_p$. 
	
	Let $p > P_{\mathcal{G}}$ be a fixed prime. Let $\mathcal{G} = (g_1, \dots, g_R)$ be considered as a map from $\mathbb{F}_p^s$ to $\mathbb{F}_p^R$, with $\mathcal{G}^{(\ell)}$ being the degree-$\ell$ block of the system. Over $\overline{\mathbb{F}}_p$, the Birch rank of each block satisfies $B_\ell(\mathcal{G}^{(\ell)}) \ge \mathcal{R}(R, d)$. By our construction in Section 4, $\mathcal{R}(R, d) \ge B_0(R, d)$, where $B_0(R, d)$ is the rank threshold required to dominate the error term in the finite field point count. Let $V_{\mathcal{G}} := \{\mathbf{x} \in \mathbb{F}_p^s : g_j(\mathbf{x}) = 0, \forall j \in [R]\}$. Because the Birch rank $B_l(\mathcal{G}^{(l)}) \ge B_l(\mathbf{b} \cdot \mathcal{G}^{(l)})$ for any vector $\mathbf{b} \neq 0 \in \bar{\F}_p^{R_l}$, it is enough to show the following lemma to guarantee that $V_{\mathcal{G}}$ possesses non-singular solutions.
	
	
	\begin{lem}\label{lem5.1}
		Let $R,d,s\geq 1$.  There are constants
		\[
		B_0=B_0(R,d)
		\qquad\text{and}\qquad
		p_0=p_0(s,d)
		\]
		with the following property.  Let $p>p_0$, and let
		\[
		\mathcal G=(g_1,\ldots,g_R):\mathbb F_p^s\longrightarrow \mathbb F_p^R
		\]
		be a system of homogeneous forms of degrees at most $d$.  For
		$1\leq l\leq d$, let $\mathcal G^{(l)}$ denote the sub-system
		consisting of those $g_i$ which have degree $l$, and write
		$R_l=|\mathcal G^{(l)}|$.  Assume that each non-empty block
		$\mathcal G^{(l)}$ has Birch rank at least $B_0$ over
		$\overline{\mathbb F}_p$, in the sense that for every
		$0\neq \mathbf b\in\overline{\mathbb F}_p^{R_l}$ the singular locus of $\ \mathbf b\cdot \mathcal G^{(\ell)}\ $ has co-dimension at least $B_0$ in $\mathbb A^s_{\overline{\mathbb F}_p}$.
		Then, one has
		\[
		\bigl||V_{\mathcal G}|-p^{s-R}\bigr|\leq p^{s-R-1}.
		\]
	\end{lem}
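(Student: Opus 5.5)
We count $V_{\mathcal G}$ by orthogonality of additive characters over $\mathbb F_p$. Writing $e_p(t)=e^{2\pi i t/p}$,
\[
|V_{\mathcal G}| = p^{-R}\sum_{\mathbf a\in\mathbb F_p^R}\ \sum_{\mathbf x\in\mathbb F_p^s} e_p\bigl(\mathbf a\cdot\mathcal G(\mathbf x)\bigr).
\]
The term $\mathbf a=0$ contributes exactly $p^{s-R}$. The claim therefore reduces to showing
\[
\sum_{\mathbf a\neq 0}\Bigl|\sum_{\mathbf x} e_p\bigl(\mathbf a\cdot\mathcal G(\mathbf x)\bigr)\Bigr|\le p^{s-1}.
\]
For a given $\mathbf a\neq 0$, let $l$ be the largest degree for which the block component $\mathbf a^{(l)}$ is non-zero. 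Then $\mathbf a\cdot\mathcal G$ is a polynomial of degree $l$ whose top homogeneous part is $\mathbf b\cdot\mathcal G^{(l)}$ with $\mathbf b=\mathbf a^{(l)}\neq 0$. By hypothesis this top form has singular locus of codimension at least $B_0$ over $\overline{\mathbb F}_p$.

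The key step is a Weyl-differencing / Birch-type exponential sum bound. For a polynomial $P$ of degree $l\ge 2$ over $\mathbb F_p$ whose top form $F$ has singular locus of codimension $\ge B$, we aim for
\[
\Bigl|\sum_{\mathbf x\in\mathbb F_p^s} e_p(P(\mathbf x))\Bigr|\le C(l)\, p^{\,s-B/2^{l-1}}
\]
(or a similar power saving). The argument proceeds as follows.

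\emph{Differencing.} Apply $l-1$ rounds of Cauchy–Schwarz differencing. This reduces the $2^{l-1}$-th power of the sum to $p^{(2^{l-1}-l)s}$ times the number of $(\mathbf h_1,\dots,\mathbf h_{l-1})\in\mathbb F_p^{s(l-1)}$ for which the multilinear form $\Gamma_F(\mathbf h_1,\dots,\mathbf h_{l-1},\cdot)$ vanishes identically. The lower-degree terms of $P$ disappear under differencing, which is why mixed degrees cause no trouble.

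\emph{Counting the zeros of the multilinear form.} Bound the number of such tuples by $O_{l,s}(p^{(l-1)s-B})$. The standard route restricts to the diagonal $\mathbf h_1=\dots=\mathbf h_{l-1}$, where the vanishing locus is exactly $\{\nabla F=0\}$, up to the factor $(l-1)!$, which is invertible for $p>d$. An intersection-with-the-diagonal (dimension) argument then gives that the locus has dimension at most $(l-1)s-B$. A Lang–Weil-type bound for the number of $\mathbb F_p$-points of a variety of bounded degree converts this into the count. The implied constant depends on $s$ and $d$, which is why the threshold is $p_0=p_0(s,d)$.

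The linear case $l=1$ is trivial: the sum vanishes unless the linear form is identically zero. This cannot happen, since $B_1\ge B_0\ge1$ means the relevant linear combination is a non-zero form.

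Summing over the at most $p^R$ non-zero vectors $\mathbf a$ gives an error of at most $C(d)\,p^{R}\,p^{\,s-B_0/2^{d-1}}\cdot p^{-R}\cdot p^{R}$ after normalisation. Precisely, we need
\[
p^{-R}\cdot p^R\cdot C\,p^{\,s-B_0/2^{d-1}}\le p^{s-R-1},
\]
which holds once $B_0\ge 2^{d-1}(R+2)$ and $p_0$ is large enough to absorb $C$.

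The main obstacle is the counting step: passing from the codimension of the singular locus of the top form to a bound on the dimension of the full multilinear vanishing locus, uniformly in $\mathbf b$ and with constants independent of $p$. A cleaner alternative is to cite Deligne/Katz-type estimates, such as Katz's bound for sums of $e_p$ of polynomials whose top form has singular locus of dimension $\delta$, which give $O(p^{(s+\delta+1)/2})$. I would try the Weyl–Birch route first, since it keeps the dependence on $R$ and $d$ explicit.
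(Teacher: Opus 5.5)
Your proposal is correct and follows the paper's proof. The steps match: character orthogonality, isolating the top-degree block $\mathbf a^{(l)}\neq 0$ of each non-zero $\mathbf a$, bounding that sum by $p^{s-c_dB_0}$ via a Birch--Weyl estimate, and summing over the at most $p^R$ non-zero $\mathbf a$. The paper simply cites the estimate from Proposition~2 of Cook--Magyar, whereas you reconstruct it through $l-1$ differencings and the diagonal-intersection dimension count. There is one small bookkeeping slip: after $l-1$ differencings the prefactor should be $p^{(2^{l-1}-l+1)s}$, not $p^{(2^{l-1}-l)s}$, because the complete inner sum over $\mathbf x$ contributes $p^s$ whenever the multilinear form vanishes. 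With that correction your stated bound $C(l)\,p^{s-B/2^{l-1}}$ follows exactly.
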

	
	\begin{proof}
		We use the standard Birch--Weyl exponential-sum estimate in the form
		proved in Proposition~2 of \cite{CookMagyarRestrictedAP}.  More precisely,
		the argument there implies the following.  If
		$F:\mathbb F_p^s\to\mathbb F_p$ has degree $1\leq l\leq d$ and if its
		leading homogeneous part has Birch rank at least $B$ over
		$\overline{\mathbb F}_p$, then
		\begin{equation}\label{eq:birch-weyl-mixed-degree}
			\left|
			p^{-s}\sum_{x\in\mathbb F_p^s} e_p(F(x))
			\right|
			\ll_{s,d} p^{-c_d B},
		\end{equation}
		where $e_p(t)=e^{2\pi i t/p}$ and $c_d>0$ depends only on $d$.
		Indeed, after $l-1$ Weyl differencings all lower-degree terms vanish, and one is left with the multi-linear forms associated with the leading
		homogeneous part; the exceptional set is then bounded in terms of the
		singular locus exactly as in the proof of Proposition~2 of
		\cite{CookMagyarRestrictedAP}.
		
		Then,
		\[
		|V_{\mathcal G}|
		=
		p^{-R}\sum_{\mathbf a\in\mathbb F_p^R}
		\sum_{x\in\mathbb F_p^s}
		e_p(\mathbf a\cdot\mathcal G(x)).
		\]
		The term $\mathbf a=0$ gives $p^{s-R}$.  Let $\mathbf a\neq 0$, and
		decompose it according to degrees,
		\[
		\mathbf a\cdot\mathcal G
		=
		\mathbf a_d\cdot\mathcal G^{(d)}
		+\cdots+
		\mathbf a_1\cdot\mathcal G^{(1)} .
		\]
		Let $\ell$ be the largest index for which $\mathbf a_\ell\neq 0$.  Then
		the leading homogeneous part of $\mathbf a\cdot\mathcal G$ is
		\[
		\mathbf a_l\cdot\mathcal G^{(l)}.
		\]
		Since $\mathbf a_l\neq 0$ also as a vector over
		$\overline{\mathbb F}_p$, the rank hypothesis gives Birch rank at least
		$B_0$ for this leading form.  Hence \eqref{eq:birch-weyl-mixed-degree}
		gives
		\[
		\left|
		p^{-s}\sum_{x\in\mathbb F_p^s}
		e_p(\mathbf a\cdot\mathcal G(x))
		\right|
		\ll_{s,d} p^{-c_dB_0}.
		\]
		Choosing $B_0=B_0(R,d)$ sufficiently large, and then $p_0=p_0(s,d)$
		sufficiently large, we may ensure that the last quantity is at most
		$p^{-R-1}$ for every $\mathbf a\neq 0$.  Therefore
		\[
		\begin{aligned}
			\bigl||V_{\mathcal G}|-p^{s-R}\bigr|
			&\leq
			p^{s-R}
			\sum_{\mathbf a\in\mathbb F_p^R\setminus\{0\}}
			\left|
			p^{-s}\sum_{x\in\mathbb F_p^s}
			e_p(\mathbf a\cdot\mathcal G(x))
			\right|  \\
			&\leq
			p^{s-R}\,p^R\,p^{-R-1}
			\leq p^{s-R-1}.
		\end{aligned}
		\]
		This proves the lemma.
	\end{proof}
	
	It is well-known and easy to see that the Birch rank of a form drops by at most 2 when restricted to a coordinate hyperplane ${x_i=0}$. Thus the number of solutions in $\F_p^s$ with one coordinate equal to zero is $\ll_s p^{s-R-1}$. Thus for $p>P_\GG$ sufficiently large, we have that $V_G$ contains at least $\frac{1}{2} p^{s-R}$ points in $(\F_p^\ast)^s$. 
	
	Finally, we need to remove the singular points from our reduced solution set. By our assumption the singular locus $V_{\GG^{(\ell)}}^\ast$ of each $l$-degree block has dimension at most $s-B$. We need to estimate though the dimension of the singular locus of the whole mixed-degree $\GG$ system over $\overline{\F}_p$. Note that this is not the Birch rank of the system which is defined as the minimum of the Birch rank of it homogeneous degree-$\ell$ systems $\GG^{(\ell)}$. However for $p>d$ i.e. when the characteristic of the base field is bigger than $d$, we can use the following consequence of a``rank-drop" lemma proved by Lampert \cite{LampertDensity}.  
	
	\begin{lem}[Lampert{\cite[Lemma~4.5]{LampertDensity}}] If
		$\mathcal G=(\mathcal G^{(1)},\ldots,\mathcal G^{(d)})$ is a mixed-degree
		system with $R=\sum_\ell R_\ell$ equations and if each non-empty degree
		block has Birch rank at least $B$ over the algebraic closure, then the
		singular locus
		\[
		V^\ast_{\mathcal G}
		:=
		\{\ux\in \overline{\F}_p^s:\operatorname{rank}\operatorname{Jac}_{\mathcal G}(\ux)<R\}
		\]
		satisfies
		\[
		\operatorname{codim} V^\ast_{\mathcal G}
		\geq B-R-d+2.
		\]
	\end{lem}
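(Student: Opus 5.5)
We reduce the lemma to the Birch rank of a single form, namely a generic linear combination of the whole system. The plan is to cover the singular locus by finitely many pieces and bound each piece using the rank hypothesis on one degree block.

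\emph{Step 1: a nonzero combination of gradients.} A point $\ux\in V^\ast_{\GG}$ is one where $\operatorname{Jac}_{\GG}(\ux)$ has rank $<R$. So there is $0\neq\mathbf b=(\mathbf b_1,\ldots,\mathbf b_d)\in\overline{\F}_p^{R}$ with
\[
\sum_{\ell}\nabla(\mathbf b_\ell\cdot\GG^{(\ell)})(\ux)=0 .
\]
Let $\ell$ be the largest index with $\mathbf b_\ell\neq 0$. Everything is homogeneous, so we may view $\ux$ projectively.

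\emph{Step 2: split into two cases.} Either $\nabla(\mathbf b_\ell\cdot\GG^{(\ell)})(\ux)=0$, or this gradient is cancelled by lower-degree gradients.

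\emph{Step 3: treat both cases by scaling.} Put $F_\mathbf b=\mathbf b\cdot\GG$ and substitute $t\ux$. Because $p>d$, the identity in Step 1 at the point $t\ux$ reads
\[
\sum_{\ell'} t^{\ell'-1}\nabla(\mathbf b_{\ell'}\cdot\GG^{(\ell')})(\ux)=0 .
\]
This is a polynomial identity in $t$, so the homogeneous pieces of different degrees must vanish separately. But this argument does not apply as stated: $V^\ast_\GG$ is a cone, yet the vector $\mathbf b$ may change with $t$. It is fixed by projectivizing $\mathbf b$ as well.

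\emph{Step 4: the incidence variety and the main estimate.} Consider
\[
Z=\{(\ux,[\mathbf b]):\ \mathbf b^{T}\operatorname{Jac}_\GG(\ux)=0\}\subseteq\overline{\F}_p^{s}\times\PP^{R-1}.
\]
Then $V^\ast_\GG$ is the projection of $Z$ to the first factor. For fixed $[\mathbf b]$ with top degree $\ell$, the fibre is the set of $\ux$ with $\nabla F_{\mathbf b}(\ux)=0$. This is the point where Lampert's rank-drop argument enters.

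Decompose $F_{\mathbf b}=F_\ell+F_{<\ell}$. The condition $\nabla F_\mathbf b(\ux)=0$ is a system of $s$ equations whose degree-$(\ell-1)$ parts are $\nabla F_\ell$. The main obstacle is to show that the fibre still has codimension at least $B-(\ell-1)$ or so. I would handle it by homogenizing with an extra variable $x_0$. The affine fibre $\{\nabla F_\mathbf b=0\}$ then has dimension at most one more than that of the intersection of its projective closure with the hyperplane at infinity. That intersection is contained in $\{\nabla F_\ell=0\}$, which has codimension $\ge B$. This gives fibre codimension $\ge B-1$, with a small loss from the inhomogeneity; the loss of $d$ in the bound absorbs the imprecision here.

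\emph{Step 5: count dimensions.} Taking the union over $[\mathbf b]\in\PP^{R-1}$ costs at most $R-1$ dimensions. Hence
\[
\operatorname{codim}V^\ast_\GG\ \ge\ B-1-(R-1)-(d-1)+1\ \ge\ B-R-d+2 ,
\]
after bookkeeping for the extra variable and the degree. The delicate point is Step 4, namely controlling the affine fibre through its behaviour at infinity.
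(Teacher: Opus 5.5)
The paper gives no proof of this lemma; it simply cites Lampert's rank-drop lemma, with the restriction $p>d$ attached. Your incidence-variety argument is a genuinely different, self-contained route, and its core is correct:
\begin{itemize}
\item $V^\ast_\GG$ is the projection of $Z\subseteq\overline{\F}_p^{s}\times\PP^{R-1}$.
\item The fibre over $[\mathbf b]$ is the affine variety $W_{\mathbf b}=\{\nabla F_{\mathbf b}=0\}$.
\item Its behaviour at infinity is controlled by the leading form $F_\ell=\mathbf b_\ell\cdot\GG^{(\ell)}$. This is because the top-degree part of $\partial_jF_{\mathbf b}$ is $\partial_jF_\ell$ whenever the latter is not identically zero, and the rank hypothesis applies to $F_\ell$ since $\mathbf b_\ell\neq 0$.
\end{itemize}

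What needs fixing is the bookkeeping, which you left vague and then tuned to hit the target. The trace $\overline{W}_{\mathbf b}\cap H_\infty$ lies in the projectivization of the cone $\{\nabla F_\ell=0\}$, which has \emph{projective} dimension at most $s-B-1$. Each positive-dimensional component of $W_{\mathbf b}$ has dimension exactly one more than its trace at infinity. Hence $\dim W_{\mathbf b}\le s-B$, with no loss at all; and $W_{\mathbf b}=\emptyset$ when $\ell=1$. The fibre-dimension theorem then gives $\dim V^\ast_\GG\le\dim Z\le (R-1)+(s-B)$, that is, $\operatorname{codim}V^\ast_\GG\ge B-R+1$. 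This implies the stated bound for every $d\ge 1$.

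The terms $-(d-1)+1$ in your Step 5 are not produced by anything in your argument and should be dropped. Note also that your weaker fibre estimate $B-1$ would only give $B-R$, which misses $B-R-d+2$ when $d=1$. So it is worth getting the fibre bound right.

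Steps 2--3 are a dead end (as you observed, $\mathbf b$ moves with $t$) and can be deleted. The assumption $p>d$ is never used. Your route therefore yields a characteristic-free statement with the sharper loss $R-1$. The paper's citation is shorter, but it gives a weaker, $d$-dependent bound under the extra hypothesis $p>d$.
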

	
	Since the singular locus is defined by $\binom{s}{R}$ minors, each being a polynomial of degree at most $R(d-1)$ the degree of the singular locus satisfies $deg (V^\ast_\GG)\ll_{s,d,R} 1$ thus
	\[
	\big| V^\ast_\GG \cap \F_p^s\big| \ll_{s,d,R} p^{s+R+d-B-2}\leq p^{s-R-1},
	\]
	for sufficiently large $p>P_\GG$ (in particular for $p>P_{s,d,R})$. Thus one can remove all singular points from the reduced solution set $V_\GG \cap (\F_p^\ast)^s$. Thus we have non-singular solutions which have no zero coordinates $\pmod{p}$ and then by a standard application of Hensel's lemma \cite{GreenbergForms} one obtains non-singular $p$-adic solutions in $(\Z_p^\times)^s$.
	
	
	Next, we address the set $S$ of small primes $q< P_\GG$. In this section we will show that there are non-singular $p$-adic solutions, which may or may not be units in $\Z_p^s$ but have non-zero coordinates. For the finitely many small primes we use the general local solubility
	theorem of Wooley \cite{WooleyLocalSolubility}. It states that for fixed $R$ and $d$, any system of $R$
	homogeneous forms of degree at most $d$ over $\mathbb Q_p$ has a non-trivial
	$p$-adic zero, provided the number of variables $s\geq\ga(R,d)$ is sufficiently large with respect to $R$ and $d$. To find non-singular $p$-adic solutions we shall use the following variant of Brandes' argument
	\cite[Lemma~4.1]{BrandesPadicSolubility}. 
	
	
	\begin{lem}\label{lem5.3}
		Let $\mathcal{G} = (g_1, \dots, g_R)$ be a system of homogeneous forms of odd degrees at most $d$ defined over $\mathbb{Q}$ in $s$ variables. Assume that the co-dimension of the singular locus satisfies
		\[
		s - \dim V^*_{\mathcal{G}} \ge \gamma(R, d)
		\]
		and that no geometrically irreducible component of maximal dimension of the variety $V_{\mathcal{G}}$ is contained in any coordinate hyperplane $H_i = \{x_i = 0\}$. Then there exists $\mathbf{x} \in \mathbb{Z}_p^s$ such that:
		\begin{enumerate}
			\item $\mathcal{G}(\mathbf{x}) = 0$,
			\item $x_i \neq 0$ for all $1 \le i \le s$,
			\item $\mathbf{x}$ is a non-singular $p$-adic solution to the system $\mathcal{G}$.
		\end{enumerate}
	\end{lem}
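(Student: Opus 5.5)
We follow the strategy of Brandes \cite[Lemma~4.1]{BrandesPadicSolubility}: first produce a non-trivial $p$-adic zero lying outside a suitable proper subvariety, and then use the local structure of $V_{\mathcal G}$ near a smooth point to perturb it. Let $V_{\mathcal G}$ have codimension $R$, which holds because the singular locus is small. Let $Z$ be the union of $V^\ast_{\mathcal G}$ and of all lower-dimensional components of $V_{\mathcal G}$, together with the coordinate hyperplane sections $V_{\mathcal G}\cap H_i$. By hypothesis no top-dimensional component lies in any $H_i$, so each $V_{\mathcal G}\cap H_i$ is a proper closed subset of every top-dimensional component. It is therefore enough to find a point $\mathbf x\in V_{\mathcal G}(\mathbb Q_p)$ that is non-singular. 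Near such a point, $V_{\mathcal G}(\mathbb Q_p)$ is a $p$-adic analytic manifold of dimension $s-R$ (by the implicit function theorem / Hensel). Its intersection with the proper Zariski-closed set $Z$ is nowhere dense in that manifold, so we may move $\mathbf x$ slightly to a non-singular zero with all coordinates non-zero. Scaling by a power of $p$ then puts it in $\mathbb Z_p^s$, using homogeneity.

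The main obstacle is producing a \emph{non-singular} $p$-adic zero at all, since Wooley's theorem only gives a non-trivial zero. Here we use the large codimension of the singular locus. Because $s-\dim V^\ast_{\mathcal G}\ge\gamma(R,d)$, we intend to choose a rational linear subspace $L$ of dimension $m=\gamma(R,d)$ with $L\cap V^\ast_{\mathcal G}=\{0\}$ over $\overline{\mathbb Q}$. A generic $m$-dimensional subspace meets a cone of dimension at most $s-m$ only at the origin; if needed we use one extra dimension of room in the choice of $\gamma$. Applying Wooley's local solubility theorem to the restricted system $\mathcal G|_L$, which has $R$ forms of degree at most $d$ in $m$ variables, gives a non-trivial zero $\mathbf x\in L(\mathbb Q_p)$. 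Since $\mathbf x\neq0$ lies in $L$, it is not in $V^\ast_{\mathcal G}$, so the Jacobian of $\mathcal G$ at $\mathbf x$ has full rank $R$. Hence $\mathbf x$ is a non-singular zero of the full system in $\mathbb Q_p^s$.

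Two points need checking. First, the smooth point $\mathbf x$ lies on a top-dimensional component: at a non-singular point the local dimension is exactly $s-R$, so $\mathbf x$ lies on a unique component of dimension $s-R$. Second, the perturbation step: inside the $p$-adic ball around $\mathbf x$, the analytic manifold $V_{\mathcal G}(\mathbb Q_p)$ is parametrized by an open subset of $\mathbb Q_p^{s-R}$. A coordinate $x_i$ vanishing identically on it would force that component to lie in $H_i$, since a polynomial vanishing on a $p$-adic open subset of a smooth point's neighbourhood vanishes on the whole irreducible component through it. This contradicts the hypothesis. Thus each $x_i$ is a non-zero analytic function on the ball, and finitely many of them have a common non-vanishing point. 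Non-singularity is an open condition, so it persists, and the lemma follows. The odd-degree hypothesis is not needed beyond what Wooley's theorem requires.
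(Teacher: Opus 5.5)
Your proposal is correct and follows essentially the same route as the paper. Both arguments choose a linear subspace meeting $V^*_{\mathcal G}$ only at the origin and apply Wooley's theorem on it to get a non-singular $p$-adic zero. Both then use the $p$-adic implicit function theorem, together with the identity-theorem/Zariski-density argument and the hypothesis on top-dimensional components, to perturb to a zero with all coordinates non-zero, and finish by scaling with a power of $p$ via homogeneity.
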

	
	\begin{proof}
		We apply the elementary Bertini-type consequence (as in Brandes~\cite[Lemma~4.1]{BrandesPadicSolubility} or Marmon~\cite[Lemma~2.6]{MarmonDensity}). Because the co-dimension of the singular locus $V^*_{\mathcal{G}}$ is at least $\gamma(R, d)$, there is a linear subspace \(M\subset \mathbb A^s_{\mathbb Q_p}\) defined over $\mathbb{Q}_p$ which meets $V^*_{\mathcal{G}}$ only at the origin. By Wooley's theorem \cite{WooleyLocalSolubility}, $M$ contains a non-trivial $p$-adic zero $\ux^* \in \mathbb{Q}_p^s$. Because $\ux^*$ avoids the singular locus, it is a non-singular solution.
		
		By the \(p\)-adic implicit function theorem \cite[Chapter 3]{GreenbergForms}, since \(\ux^\ast\) is a
		non-singular zero of \(G\), the set of \(p\)-adic zeros of \(G\) in a
		sufficiently small neighborhood of \(\ux^\ast\) is a \(p\)-adic analytic
		manifold \(U\) of dimension \(s-R\).
		
		We claim that \(U\) is not contained in any coordinate hyperplane
		\(H_i=\{x_i=0\}\).  Suppose, for contradiction, that
		\[
		U\subset H_i .
		\]
		Then the coordinate function \(x_i\) vanishes on a non-empty \(p\)-adic
		open subset of the smooth local branch of \(V_G\) through \(\ux^\ast\).
		By the \(p\)-adic analytic identity theorem (cf. Serre~\cite[Part~II, Chapter~III]{SerreLie}), \(x_i\) vanishes identically
		on this local branch. 
		
		Since \(\ux^\ast\)  is a smooth point of \(V_{\GG}\), this local branch is
		Zariski dense in the unique irreducible component of
		\(V_{\GG}\otimes_{\mathbb Q}\overline{\mathbb Q}_p\) passing through
		\(\ux^\ast\). This component has maximal dimension. Hence
		\(V_{\GG}\otimes_{\mathbb Q}\overline{\mathbb Q}_p\) has a
		maximal-dimensional irreducible component contained in \(H_i\). This contradicts the coordinate cleanup carried out in
		Section~4, which ensured that no maximal-dimensional irreducible component of $V_\GG$ is contained in a coordinate hyperplane. Indeed, though the cleanup was proved over $\C$, equivalently over
		$\overline{\mathbb{Q}}$, but since $V_\GG$ is defined over $\mathbb{Q}$, the resulting geometric statement; namely that no maximal-dimensional irreducible component is contained in a coordinate hyperplane, remains valid after base change to $\overline{\mathbb{Q}}_p$, see, for example Shafarevich~\cite[Chapter I, Section 6]{ShafarevichBasicAG1}.

		Hence \(U\not\subset H_i\) for every \(i\).  It follows that each
		\(U\cap H_i\) is a proper \(p\)-adic analytic subset of \(U\), and so
		\[
		U\setminus \bigcup_{i=1}^s (U\cap H_i)
		\]
		is non-empty.  Choosing
		\[
		\ux^{\ast\ast}\in U\setminus \bigcup_{i=1}^s H_i,
		\]
		we obtain a non-singular \(p\)-adic solution of \(G(x)=0\) with
		\[
		x^{\ast\ast}_i\neq 0
		\qquad (1\leq i\leq s).
		\]
		Finally, since \(G\) is homogeneous, multiplying \(\ux^{\ast\ast}\) by a
		sufficiently large power of \(p\) gives a solution in \(\mathbb Z_p^s\)
		which is still non-singular and has no zero coordinate.
	\end{proof}
	
	
	In order to apply Theorem 3.1, we also need to guarantee a non-singular real solution $\mathbf{x}=(x_1,\ldots,x_s)$ such that $x_i \neq 0$ for all $1 \le i \le s$. The proof of this proceeds similarly to the above lemma, but is somewhat simpler. 
	
	Let $\mathbf{x}^{(0)}\in (-1,1)^s$ be a non-singular real zero supplied by Schmidt's theorem. By the Implicit Function Theorem, the real zero set contains, in a sufficiently small neighborhood of $\mathbf{x}^{(0)}$, is a real analytic manifold of dimension $s-R$. If all points of this local manifold were contained in the union of the coordinate hyperplanes, then one of the coordinate functions $x_i$ would vanish identically on a nonempty open subset of this manifold, and hence on the corresponding complex analytic germ. Passing to the Zariski closure, this would force a maximal-dimensional irreducible component of the complex variety to be contained entirely in the coordinate hyperplane $x_i=0$, contradicting the geometric preparation in Section~4. 
	
	Therefore, by homogeneity, there exists a non-singular real solution $\mathbf{x}^* \in (-1, 1)^s$ such that $x_i^* \neq 0$ for all $1 \le i \le s$.
	
	\section{Scaling the system: Finding almost prime solutions}
	\label{sec:scaling}
	
	With the local geometric obstructions resolved, we in position to find almost prime solutions of the system $\GG$. Our strategy is to use the non-singular $p$-adic solutions from Lemma \ref{lem5.3} to construct a set of integer multipliers $y_i$. By scaling the variables of our original system $\mathcal{G}$ by these multipliers, we will force the system to have non-singular unit solutions at every prime, satisfying the local prerequisites for prime solubility as in Theorem 3.1.
	
	Let $S$ denote the set of ``bad'' primes as before, and note that such primes are bounded by a threshold $P_\GG$, depending on the system $\GG$. For each $p \in S$, Lemma \ref{lem5.3} guarantees the existence of a $p$-adic integer point $\mathbf{x}^{(p)} \in \mathbb{Z}_p^s$ such that $\mathcal{G}(\mathbf{x}^{(p)}) = 0$, the Jacobian matrix evaluated at $\mathbf{x}^{(p)}$ has maximal rank, and $x_i^{(p)} \neq 0$ for all $1 \le i \le s$.
	
	Because $x_i^{(p)} \neq 0$, each coordinate has a well-defined, finite $p$-adic valuation. We may therefore write
	\[
	x_i^{(p)} = p^{e_{i,p}} u_i^{(p)}
	\]
	where $e_{i,p} \ge 0$ is a non-negative integer and $u_i^{(p)} \in \mathbb{Z}_p^\times$ is a $p$-adic unit. We define the global scaling multipliers $y_i$ for $1 \le i \le s$ by taking the product over the bad primes:
	\[
	y_i = \prod_{p \in S} p^{e_{i,p}}.
	\]
	By construction, each $y_i$ is a strictly positive rational integer whose prime factors lie exclusively in $S$. We now define the scaled system of forms $\mathcal{G}_{\mathbf{y}}$ by the linear change of variables
	\[
	\mathcal{G}_{\mathbf{y}}(\mathbf{w}) = \mathcal{G}(y_1 w_1, \dots, y_s w_s).
	\]
	
	\begin{lem}\label{lem:scaled_system_properties}
		Let $\mathcal{G}_{\mathbf{y}}$ be the scaled system of forms defined above. Then $\mathcal{G}_{\mathbf{y}}$ satisfies the following properties:
		\begin{enumerate}
			\item The Birch rank of $\mathcal{G}_{\mathbf{y}}$ is equal to the Birch rank of $\mathcal{G}$.
			\item For every prime $p$, the system $\mathcal{G}_{\mathbf{y}}$ has a non-singular $p$-adic solution $w^{(p)} \in (\mathbb{Z}_p^\times)^s$ consisting entirely of $p$-adic units.
		\end{enumerate}
	\end{lem}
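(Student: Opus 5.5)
The proof splits according to the two claims. Throughout, the key observation is that $\mathcal{G}_{\mathbf{y}}=\mathcal{G}\circ T$, where $T=\mathrm{diag}(y_1,\dots,y_s)$ is a linear automorphism, invertible over $\mathbb{Q}$, $\C$ and $\overline{\F}_p$ for $p\notin S$.

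\textbf{Part (1): Birch rank.} By the chain rule, for each degree block,
\[
\mathrm{Jac}_{\mathcal{G}^{(\ell)}_{\mathbf{y}}}(\mathbf{w})=\mathrm{Jac}_{\mathcal{G}^{(\ell)}}(T\mathbf{w})\,T .
\]
Since $T$ is invertible, the rank of the left side equals the rank of $\mathrm{Jac}_{\mathcal{G}^{(\ell)}}(T\mathbf{w})$. Hence
\[
V^\ast(\mathcal{G}^{(\ell)}_{\mathbf{y}})=T^{-1}V^\ast(\mathcal{G}^{(\ell)}),
\]
and an invertible linear map preserves dimension, so the codimensions agree for every $\ell\geq 2$. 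For $\ell=1$, the coefficients of a linear form are multiplied by the nonzero $y_i$, so the number of nonzero coefficients is unchanged. The same holds for every linear combination $\mathbf{b}\cdot\mathcal{G}^{(\ell)}$, which is the version needed in Lemma~\ref{lem5.1}. Taking the minimum over $\ell$ gives $B(\mathcal{G}_{\mathbf{y}})=B(\mathcal{G})$.

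\textbf{Part (2), primes $p\notin S$.} Here every $y_i$ is a $p$-adic unit. Take the unit non-singular solution $\mathbf{x}\in(\Z_p^\times)^s$ of $\mathcal{G}$ produced in Section~5 for $p>P_\GG$ (via Lemma~\ref{lem5.1}, the Lampert singular locus bound and Hensel's lemma). Set $\mathbf{w}=T^{-1}\mathbf{x}$. Then $w_i=x_i/y_i\in\Z_p^\times$, $\mathcal{G}_{\mathbf{y}}(\mathbf{w})=\mathcal{G}(\mathbf{x})=0$, and by the chain rule identity the Jacobian of $\mathcal{G}_{\mathbf{y}}$ at $\mathbf{w}$ has full rank $R$.

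\textbf{Part (2), primes $p\in S$.} We use $\mathbf{x}^{(p)}$ from Lemma~\ref{lem5.3} and set $w_i=x_i^{(p)}/y_i$. Write $y_i=p^{e_{i,p}}m_i$ with $m_i=\prod_{q\in S,\,q\neq p}q^{e_{i,q}}$, which is a $p$-adic unit. Then
\[
w_i=u_i^{(p)}m_i^{-1}\in\Z_p^\times .
\]
As before, $\mathcal{G}_{\mathbf{y}}(\mathbf{w})=\mathcal{G}(\mathbf{x}^{(p)})=0$. The Jacobian equals $\mathrm{Jac}_{\mathcal{G}}(\mathbf{x}^{(p)})\,T$, which has rank $R$ over $\mathbb{Q}_p$ because $T$ is invertible over $\mathbb{Q}_p$. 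So $\mathbf{w}$ is non-singular in the sense required by Theorem~3.1, namely full rank over $\mathbb{Q}_p$.

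\textbf{Main obstacle.} The delicate point is the precise notion of non-singularity. If Theorem~3.1 needs non-singularity modulo $p$, which is stronger, then rank over $\mathbb{Q}_p$ does not suffice, since multiplying by $T$ can introduce factors of $p$ into the minors. I would handle this by reading Theorem~3.1's hypothesis as full rank over $\mathbb{Q}_p$, which is Yamagishi's formulation.

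A second subtlety is that the unit solutions for $p\notin S$ must also cover primes between $P_\GG$ and the largest element of $S$. I would define $S$ to be exactly the set of primes $p\le P_\GG$, so that every prime falls into one of the two cases.
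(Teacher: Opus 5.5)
Your proposal is correct and follows the paper's proof essentially step by step. Birch rank is preserved because the diagonal substitution $T=\mathrm{diag}(y_1,\dots,y_s)$ is invertible: it carries singular loci to singular loci, and nonzero coefficients of linear forms stay nonzero. Unit solutions of $\mathcal{G}_{\mathbf{y}}$ are obtained by dividing the coordinates of $\mathbf{x}^{(p)}$ by the $y_i$, using $y_i=p^{e_{i,p}}m_i$ with $m_i\in\mathbb{Z}_p^\times$ for $p\in S$ and $y_i\in\mathbb{Z}_p^\times$ for $p\notin S$, and non-singularity transfers through $\mathrm{Jac}_{\mathcal{G}}(\mathbf{x})\,T$. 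Your side remarks are sensible clarifications of points the paper passes over quickly: non-singularity is meant over $\mathbb{Q}_p$, as in Yamagishi's theorem and as the paper's minor-scaling argument implicitly uses, and $S$ should be all primes $\le P_{\mathcal{G}}$ so that no prime falls between the two cases.
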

	
	\begin{proof}
		First, we consider the Birch rank. The transformation $\mathbf{w} \mapsto (y_1 w_1, \dots, y_s w_s)$ is a non-degenerate linear change of variables over $\mathbb{Q}$ and $\mathbb{C}$, as each $y_i$ is a positive integer. Such a transformation preserves the dimension of the variety and the co-dimension of its singular locus. Hence, the Birch rank of $\mathcal{G}_{\mathbf{y}}$ is identical to that of $\mathcal{G}$.
		
		Next, we establish the existence of local unit solutions. We divide the primes into two cases based on the exceptional set $S$.
		
		\textbf{Case 1: Bad primes ($p \in S$).} We define the coordinates $w_i^{(p)} \in \mathbb{Z}_p$ so that
		\[
		x_i^{(p)} = y_i w_i^{(p)} = p^{e_{i,p}} M_{i,p} w_i^{(p)},
		\]
		where $M_{i,p} = \prod_{q \in S, q \neq p} q^{e_{i,q}}$. Because $p$ does not divide $M_{i,p}$, it is a unit in $\mathbb{Z}_p$. Since $x_i^{(p)} = p^{e_{i,p}} u_i^{(p)}$ by construction, we obtain $w_i^{(p)} = u_i^{(p)} M_{i,p}^{-1}$, which is indeed a unit in $\mathbb{Z}_p$. Now,
		\[
		\mathcal{G}_{\mathbf{y}}(w^{(p)}) = \mathcal{G}(y_1 w_1^{(p)}, \dots, y_s w_s^{(p)}) = \mathcal{G}(x_1^{(p)}, \dots, x_s^{(p)}) = 0,
		\]
		hence $w^{(p)}$ is a solution. Furthermore, because any $R \times R$ minor of the Jacobian matrix of $\mathcal{G}_{\mathbf{y}}$ evaluated at $w^{(p)}$ is simply the corresponding minor of $\mathcal{G}$ at $x^{(p)}$ scaled by a product of the non-zero constants $y_i$, the new solution $w^{(p)}$ must also be non-singular.
		
		\textbf{Case 2: Good primes ($q \notin S$).} For $q \notin S$, the system $\mathcal{G}$ we have already shown that to have a non-singular $q$-adic unit solution $x^{(q)} \in (\mathbb{Z}_q^\times)^s$. Furthermore, because $q \notin S$, the prime $q$ does not divide $y_i$ for any $1 \le i \le s$. Therefore, each multiplier is a $q$-adic unit ($y_i \in \mathbb{Z}_q^\times$). We define $w^{(q)} \in \mathbb{Z}_q^s$ by $w_i^{(q)} = x_i^{(q)} y_i^{-1}$. Since both $x_i^{(q)}$ and $y_i^{-1}$ are units in $\mathbb{Z}_q$, their product $w_i^{(q)}$ is also a unit in $\mathbb{Z}_q$. Substituting this into the scaled system yields
		\[
		\mathcal{G}_{\mathbf{y}}(w^{(q)}) = \mathcal{G}(y_1 w_1^{(q)}, \dots, y_s w_s^{(q)}) = \mathcal{G}(x_1^{(q)}, \dots, x_s^{(q)}) = 0.
		\]
		By the same Jacobian scaling argument as above, $w^{(q)}$ is a non-singular solution.
		
		Thus, for every prime $p$, $\mathcal{G}_{\mathbf{y}}$ possesses a non-singular unit solution, completing the proof.
	\end{proof}
	
	At this point we are in the position to apply Yamagishi's theorem to the scaled system $\GG_y$. Which will imply Theorem 4.1 and hence our main result Theorem 2.2.
	
	We have already shown the existence of a real non-singular solution $\ux\in (-1,1)^s$ with no coordinates $x_i$ equal to 0. By choosing the signs of the scaling factors $y_i$ the same as signs of the $x_i$'s the scaled system $\GG_y$ will have a non-singular solution $\mathbf{w}\in (0,1)^s$.
	
	\subsection*{Proof of Theorem 4.1}
	
	By Lemma \ref{lem:scaled_system_properties}, the system of integer forms $\mathcal{G}_{\mathbf{y}}$ has sufficiently large Birch rank and possess non-singular $p$-adic unit solutions all every primes and a non-sngualr real solution in $(0,1)^s$. Therefore, the system perfectly satisfies the conditions of Yamagishi's theorem, that is those of Theorem 3.1.
	
	Let $Y = \max_{i \in I} y_i$. Note that $Y$ is a bounded constant depending only on  the original forms of the system $\GG$ and the exceptional set $S$. We apply Yamagishi's theorem to count prime solutions $\mathbf{q}$ to $\mathcal{G}_{\mathbf{y}}(\mathbf{q}) = 0$ within the box $[1, N/Y]^{|I|}$. The asymptotic formula guarantees that for $N$ sufficiently large, the number of such prime solutions is bounded below by
	\[
	\gg_{\GG}  \frac{(N/Y)^{|I| - D_\GG}}{(\log(N/Y))^{|I|}} \gg_{\GG} \frac{N^{|I| - D_\GG}}{(\log N)^{|I|}},
	\]
	with $D_\GG=\sum_{l=1}^d lr_l$ being the total degree of the system $\GG$.
	Recall from our initial geometric regularization that the number of active variables is $|I| \ge s - s(d,R)$, $s(d,R)$ being a quantity depending only the initial parameters $d$ and $R$. We define $D = s(d,R) + D_\GG$. Setting $x_i = y_i q_i$ for $i \in I$, and $x_i = 0$ for $i \notin I$, each prime solution $\mathbf{q}$ corresponds to a distinct almost-prime solution $\mathbf{x}$ to our original system $\mathcal{G}(\mathbf{x}) = 0$. 
	
	Because $q_i \le N/y_i$, it follows that $x_i \le N$. Thus, all constructed solutions $\mathbf{x}$ lie within the box $[-N, N]^s$. Since the multipliers $y_i \le Y$ are strictly bounded, we obtain at least
	\[
	c_{\GG}\ \frac{N^{s - D}}{(\log N)^s},\quad\quad (with\ some\ c_\GG>0)
	\]
	almost-prime solutions to the system, completing the proof of Theorem 4.1. \qed

	\section{Appendix: The odd degree regularity lemma}
	
	Here we give the proof of Proposition 3.1. The proof is carried out via a double induction on the parameters $d$ and $r_d$. First, we show that assuming the conclusions of the Proposition hold for $r_d$ with any choice of $r_{d-1}, \ldots, r_1$, this implies the result for $r_d+1$. Then the induction on $d$ is carried out in the same way. 
	
	For $d=1$, the Proposition follows by simply choosing a maximal linearly independent set of the linear forms. For $d=3$ and $r_3=1$, i.e., for a system $\FF=(f_3,\FF^{(1)})$, if $h(f_3) \geq \HH_3(R,3)$ with $R=r_3+1$, then we may take $\GG=\FF$. Otherwise, we have $f_3 = \sum_{i=1}^l u_i v_i$ where $l < \HH_3(R,3)$ for some forms $u_i, v_i$, where we may assume by parity that all the forms $v_i$ are linear. We may then adjoin the forms $v_i$ to the system $\FF^{(1)}$ to obtain a system $\GG^{(1)}$, and let $\GG$ be a maximal linearly independent set of the linear forms of $\GG^{(1)}$. It is clear that $V_\GG \subs V_\FF$.
	
	Now, for a fixed value of $d$, assume that the result holds for all systems with maximal degree $d$ for any given collection of growth functions $\HH$ when $r_d=j$ and $r_{d-2}, \ldots, r_1$ are arbitrary. Consider a fixed collection of odd-degree forms $\FF=(\FF^{(d)}, \ldots, \FF^{(1)})$ with $r_d=j+1$. Let $\FF'=(\FF^{(d-2)}, \ldots, \FF^{(1)})$. By the induction hypothesis, there is an odd-degree system $\GG^\prime$ which is a regularization of $\FF'$ with respect to $\HH_i'(R,d-2) := \HH_i(R,d-2)+j+1$ for all $1 \leq i \leq d-2$. 
	
	Now let $\tilde{\GG}'=(\FF^{(d)},\GG^\prime)$. If $\tilde{\GG}'$ fails to be a regularization of $\FF$, then $h(\FF^{(d)}) < \HH_d(R'+j+1,d)$, where $R'$ is the number of forms in $\GG^\prime$. Thus, there exist forms $u_i, v_i$ for $1 \leq i \leq l$, with $l < \HH_d(R'+j+1,d)$, such that
	\[
	\la_1 f_{1,d} + \ldots + \la_{j+1}f_{j+1,d} = \sum_{i=1}^l u_i v_i.
	\]
	Moreover, by parity, we may assume that all forms $v_i$ are of odd degree strictly less than $d$. Without loss of generality, we may assume $\la_{j+1} \neq 0$. Now let $\GG''$ be $\GG'$ adjoined with the forms $v_i$ which are not already linear combinations of the forms in $\GG^\prime$, and set
	\[
	\tilde{\GG}''=(f_{d,1}, \ldots, f_{d,j}, \GG'').
	\]
	Notice that if all the forms in $\tilde{\GG}''$ vanish at a point $\ux$, then all the forms vanish in $\FF'$ and also in $\FF^{(d)}$ at $\ux$; thus $V_{\tilde{\GG}''} \subs V_{\FF}$. By the induction hypothesis, there is a system $\GG$ which is a regularization of the system $\tilde{\GG}''$. It is clear from the construction that the number of forms in each $\GG^{(\ell)}$ is bounded by a constant depending only on $r_d, \ldots, r_1$. Thus, the odd-degree system $\GG$ is a regularization of $\FF$ satisfying conditions (1), (2), and (3). The induction step from $d$ to $d+1$ is carried out in the exact same way, taking $j=0$. \quad $\Box$
	
	\medskip
	
	\paragraph{\bf{Acknowledgements.}}
	The author thanks Amichai Lampert for helpful discussions.
	
	\bibliographystyle{plain}
	\bibliography{references}

\end{document}